 \newtheorem{thm}{Theorem}[section]
 \newtheorem{cor}[thm]{Corollary}
 \newtheorem{lem}[thm]{Lemma}
 \newtheorem{prop}[thm]{Proposition}
 \theoremstyle{definition}
 \newtheorem{defn}[thm]{Definition}
 \theoremstyle{remark}
 \newtheorem{rem}[thm]{Remark}
 \newtheorem*{ex}{Example}
 \numberwithin{equation}{section}
\newcommand{\be}{\begin{equation}}
\newcommand{\ee}{\end{equation}}
\newcommand{\beq}{\begin{equation}}
\newcommand{\enq}{\end{equation}}
\newcommand{\Wc}{\mathcal{W}}
\newcommand{\At}{ {\tilde{A}}}
\newcommand{\Bt}{ {\tilde{B}}}
\newcommand{\Mt}{ {\tilde{M}}}
\newcommand{\Hc}{ {\mathcal{H}}}
\newcommand{\Ran}{ {\mbox{\rm Ran}}}
\newcommand{\K}{ {\mathcal{K}}}
\newcommand{\Sc}{ {\mathcal{S}}}
\newcommand{\Tc}{ {\mathcal{T}}}
\newcommand{\cH}{ {\mathcal{H}}}
\newcommand{\cK}{ {\mathcal{K}}}
\newcommand{\cL}{ {\mathcal{L}}}
\newcommand{\Gammat}{ {\tilde{\Gamma}}}
\newcommand{\sign}{\mbox{\rm sign}}
\newcommand{\Rr}{{\mathbb{R}}}
\newcommand{\Nn}{{\mathbb{N}}}
\newcommand{\Cc}{{\mathbb{C}}}
\newcommand{\llangle}{\left\langle}
\newcommand{\rrangle}{\right\rangle}
\newcommand{\A}{A}
\newcommand{\essran}{\mbox{\rm essran}}
\begin{document}
\title[Abstract $M$-functions]{The abstract Titchmarsh-Weyl $M$-function for adjoint operator pairs and its relation to the spectrum
}

\author[Brown]{Malcolm Brown}
\address{School of Computer Science, Cardiff University, Queen's Buildings, 5 The Parade, Cardiff CF24 3AA, UK}
\email{Malcolm.Brown@cs.cardiff.ac.uk}

\author[Hinchcliffe]{James Hinchcliffe}
\address{School of Mathematics, Cardiff University, Senghennydd Road, Cardiff CF24 4AG, UK}
\email{hinchcliffeje@cardiff.ac.uk}

\author[Marletta]{Marco Marletta}
\address{School of Mathematics, Cardiff University, Senghennydd Road, Cardiff CF24 4AG, UK}
\email{MarlettaM@cardiff.ac.uk}

\author[Naboko]{Serguei Naboko}
\address{Department of Math.~Physics, Institute of Physics, St.~Petersburg State University, 1 Ulianovskaia, St.~Petergoff, St.~Petersburg, 198504, Russia}
\email{naboko@snoopy.phys.spbu.ru}

\author[Wood]{Ian Wood}
\address{Institute of Mathematics and Physics, Aberystwyth University, Penglais, Aberystwyth, Ceredigion SY 23 3BZ, UK}
\email{ian.wood@aber.ac.uk}

\begin{abstract}
In the setting of  adjoint pairs of operators we consider the question: to what extent does the 
Weyl $M$-function see the same singularities as the resolvent of a certain restriction $A_B$ of the maximal operator? We obtain results showing that it is possible to describe explicitly certain spaces $\Sc$ and $\tilde{\Sc}$ such that the  resolvent bordered by projections onto these subspaces is analytic everywhere that the $M$-function is analytic.  
We present three examples -- one involving a Hain-L\"{u}st type operator,
 one involving a perturbed Friedrichs operator and one involving a simple  ordinary
 differential operators on a half line -- which together indicate that the abstract results
 are probably best possible. 
\end{abstract}

\thanks{James Hinchcliffe and Serguei Naboko wish to thank the British EPSRC for financial support under 
grant EP/C008324/1 "Spectral Problems on Families of Domains and Operator M-functions". Serguei Naboko wishes to thank the
Russian RFBR for grant 06-01-00219. All authors wish to thank INTAS for financial support under INTAS Project No. 051000008-7883.}

\maketitle
\noindent{\bf AMS(MOS) Subject Classifications: 35J25, 35P05, 47A10, 47A11}
\section{Introduction}\label{section:1}
 In the theory of inverse problems for Schr\"{o}dinger operators on a half
 line,
\be -y'' + q(x)y = \lambda y, \;\;\; x\in (0,\infty), \label{intro1} \ee
 it has been well known since the work of Borg \cite{Borg49},
 of Marchenko \cite{Marchenko} and of Gelfand and Levitan \cite{Gelfand} that the function
 $q$ is uniquely determined by the Titchmarsh-Weyl function for the problem. Here $q$ is assumed
 to be real valued and integrable over any finite sub-interval of $[0,\infty)$ and to give rise 
 to a so-called limit point case at infinity: that is, one requires only a boundary condition at 
 the origin, and no boundary condition at infinity, in order to obtain a selfadjoint operator associated with
 the expression on the left hand side of (\ref{intro1}).
 
 The Titchmarsh Weyl function $M(\lambda)$ for this problem can be regarded as a Dirichlet
 to Neumann map for the problem. Suppose that we define a `maximal' operator $A^*$ by
\[ D(A^*) = \{ y \in L^2(0,\infty) \; | \; -y'' + q y \in L^2(0,\infty) \}, \]
\[ A^* y = -y'' + q y, \]
 where $y''$ is to be understood in the sense of weak derivatives; also define some
 `boundary' operators $\Gamma_1$ and $\Gamma_2$ on $D(A^*)$ by
\[ \Gamma_1 y = y(0), \;\;\; \Gamma_2 y = -y'(0). \]
 Then the Titchmarsh Weyl function may be defined by the expression
\[ M(\lambda) = \Gamma_2\left(\left. \Gamma_1\right|_{\ker(A^*-\lambda I)} \right)^{-1}, \]
or equivalently
\[ M(\lambda)y(0) = -y'(0) \;\;\; \mbox{when $-y'' + qy = \lambda y$ and $y\in L^2(0,\infty)$}. \]
 If we let $A_D$ denote the `Dirichlet restriction' of $A^*$, that is the restriction of $A^*$ to
\[ D(A_D) = D(A^*)\cap\ker(\Gamma_1), \]
 then the $M$-function is easily seen to be well defined for $\lambda \not\in \sigma(A_D)$.
 One may show that $(A_D-\lambda)^{-1}$ has the same poles as $M(\lambda)$, and the
 famous Weyl Kodaira formula relates the spectral measure $\rho$ of 
 $A_D$ to $M$:
\[ d\rho(k) = \frac{1}{\pi}{w-{\rm lim}}_{\epsilon\searrow 0} \Im M(k + i\epsilon)dk. \]
 In short, complete information about the original operator is encoded in $M$.

 For PDEs, similar inverse results are also available. For Schr\"{o}dinger operators on smooth domains
 with smooth potentials, for instance, the Dirichlet-to-Neumann map $M(\lambda)$ determines the potential uniquely. 
 Moreover in this PDE case it is not necessary to know $M(\lambda)$ as a function of 
 $\lambda$: it suffices to know it for one value of $\lambda$ for which it is well defined. 
 For more general classes of PDEs there are many results guaranteeing that the coefficients can be recovered 
 up to some explicit transformations. See  Isakov \cite{Isakov} for a review of inverse problems for elliptic PDEs.  
 
 In this paper we consider similar questions in the totally abstract setting of boundary triples (cf. Section \ref{section:2} for the definition).
As shown in the papers by Kre\u{\i}n, Langer and Textorius \cite{KL73,KL77,LT77} on
extensions of symmetric operators,
under an assumption of complete nonselfadjointness
of the underlying symmetric minimal operator, the
maximal operator is determined up to unitary equivalence by the 
M-function.
Moreover, recently Ryzhov \cite{Ryzhov} has shown that under the same assumptions
and an additional invertibility condition imposed on the Dirichlet 
restriction $A_D$,
the operators $A_D$ and $\Gamma_2 A_D^{-1}$ are determined
by the difference $M(z) - M(0)$ up to unitary equivalence.

 For the non-symmetric case, the
 authors considered in \cite{kn:BMNW} the question of behaviour of the abstract $M$-function(s) 
 near the boundary of the essential spectrum and asked: to what extent does the $M$-function see
 the same singularities as the resolvent of a certain restriction $A_B$ of the maximal operator? 

 In this paper we obtain results showing that it is possible to describe explicitly certain
 spaces $\Sc$ and $\tilde{\Sc}$ such that the bordered resolvent $P_{\tilde{\Sc}}(A_B-\lambda I)^{-1}
 P_{\Sc}$, in which the $P$ are orthogonal projections onto the spaces indicated, is analytic everywhere
 that $M(\lambda)$ is analytic. The spaces $\Sc$ and $\tilde{\Sc}$ are, in general,
 not closed. However we present three examples -- one involving a Hain-L\"{u}st type operator,
 one involving a perturbed Friedrichs operator and one involving  simple ordinary
 differential operators on a half line -- which together indicate that the abstract results
 in Section \ref{section:3} are probably best possible. As a result we conclude that the
 abstract approach to inverse problems may yield rather limited results unless further hypotheses
 are introduced which reflect properties of problems involving concrete ordinary and partial
 differential expressions.

We should mention that since their introduction by Vishik \cite{Vis52} for second order elliptic operators and Lyantze and Storozh \cite{Lyantze} for adjoint pairs of  abstract operators, boundary triplets have been widely used to characterise extensions of operators and investigate spectral properties using Weyl M-functions. An extension of the theory to relations can be found in the work of  
Malamud and Mogilevskii \cite{MM99,MM02}. For related recent results, particularly in the context of PDEs, we refer to the works of Alpay and Behrndt \cite{AB08}, Behrndt and Langer \cite{BL07}, Brown, Grubb, Wood \cite{BGW08}, Gesztesy and Mitrea \cite{GMZ07,GM08a,GM08b} and also to Posilicano \cite{Pos07,Pos08} and Post \cite{Post08}. 

The authors wish to thank the referee for many helpful comments.

\section{Background theory of boundary triples and Weyl functions}\label{section:2}
Throughout this article we will make the following assumptions:
\begin{enumerate}
  \item $A$, $\At$ are closed, densely defined operators in a Hilbert space $H$.
  \item $A$ and $\At$ are an adjoint pair, i.e. $A^*\supseteq\At$ and $\At^*\supseteq A$.
\end{enumerate}

\begin{prop}\cite[(Lyantze, Storozh '83)]{Lyantze}. For each adjoint pair of closed densely defined operators on $H$,
there exist ``boundary spaces'' $\cH$, $\cK$ and ``boundary operators''
  \[ \Gamma_1:D(\At^*)\to\cH,\quad \Gamma_2:D(\At^*)\to\cK,\quad \Gammat_1:D(A^*)\to\cK\quad \hbox{ and }\quad  \Gammat_2:D(A^*)\to\cH \]
  such that for $u\in D(\At^*) $ and $v\in D(A^*)$ we have an abstract Green formula
        \begin{equation}\label{Green}
          (\At^* u, v)_H - (u,A^*v)_H = (\Gamma_1 u, \Gammat_2 v)_\cH - (\Gamma_2 u, \Gammat_1v)_\cK.
        \end{equation} 
The boundary operators $\Gamma_1$, $\Gamma_2$, $\Gammat_1$ and $  \Gammat_2 $ are bounded with respect to the graph norm. The pair $(\Gamma_1,\Gamma_2)$ is 
surjective onto $\cH\times\cK$ and $(\Gammat_1,\Gammat_2)$ is surjective onto $\cK\times\cH$. Moreover, we have 
\begin{equation}\label{domains}
	D(A)= D(\At^*)\cap\ker\Gamma_1\cap \ker\Gamma_2 \quad \hbox{ and } \quad D(\At)= D(A^*)\cap\ker\Gammat_1\cap \ker\Gammat_2.
\end{equation}
The collection $\{\cH\oplus\cK, (\Gamma_1,\Gamma_2), (\Gammat_1,\Gammat_2)\}$ is called a boundary triplet for the adjoint pair $A,\At$.
\end{prop}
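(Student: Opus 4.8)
The plan is to realise the boundary data as a quotient of the maximal domain by the minimal one, exploiting the Hilbert-space structure furnished by the graph norm. First I would equip $D(\At^*)$ with the inner product $(u,u')_+ := (u,u')_H + (\At^* u, \At^* u')_H$ and $D(A^*)$ with $(v,v')_\sim := (v,v')_H + (A^* v, A^* v')_H$; since $A$ and $\At$ are closed, these are Hilbert spaces in which $D(A)$ and $D(\At)$ respectively sit as closed subspaces. Write $\mathcal M := D(\At^*)\ominus D(A)$ and $\tilde{\mathcal M} := D(A^*)\ominus D(\At)$ for the orthogonal complements, with associated orthogonal projections $P$ and $\tilde P$; these are bounded with respect to the graph norms by construction.

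The central object is the bounded sesquilinear boundary form $\mathcal B(u,v) := (\At^* u, v)_H - (u, A^* v)_H$ on $D(\At^*)\times D(A^*)$, which is exactly the left-hand side of the Green formula (\ref{Green}) we wish to represent. The key algebraic step is to identify its null spaces. If $\mathcal B(u,v)=0$ for every $v\in D(A^*)$, then $(\At^* u, v)_H = (u, A^* v)_H$ for all such $v$, so by the definition of the adjoint $u\in D(A^{**})$ with $A^{**}u = \At^* u$; closedness gives $A^{**}=A$, hence $u\in D(A)$. The converse inclusion and the mirror statement for the second variable follow from the adjoint-pair relations $A\subseteq\At^*$ and $\At\subseteq A^*$. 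Consequently $\mathcal B$ descends to a bounded, separately non-degenerate pairing of $\mathcal M$ with $\tilde{\mathcal M}$.

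I would then take the boundary spaces to be built from $\mathcal M$: fix an orthogonal decomposition $\mathcal M = \cH\oplus\cK$ and set $\Gamma_1 := \pi_\cH P$ and $\Gamma_2 := \pi_\cK P$, where $\pi_\cH,\pi_\cK$ are the coordinate projections. By construction $(\Gamma_1,\Gamma_2)$ is bounded, surjective onto $\cH\times\cK$, and has kernel exactly $D(A)$, which already yields the first half of (\ref{domains}). The remaining maps $\Gammat_1,\Gammat_2$ are forced on us by the requirement $\mathcal B(u,v)=(\Gamma_1 u,\Gammat_2 v)_\cH-(\Gamma_2 u,\Gammat_1 v)_\cK$: using the Riesz representation of the pairing I would transport the coordinate structure of $\mathcal M$ across to $\tilde{\mathcal M}$, producing bounded maps $\Gammat_1:D(A^*)\to\cK$ and $\Gammat_2:D(A^*)\to\cH$ with common kernel $D(\At)$, giving the second half of (\ref{domains}) and surjectivity onto $\cK\times\cH$.

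The hard part is this last step: manufacturing $\Gammat_1,\Gammat_2$ that are simultaneously bounded, surjective onto the swapped product $\cK\times\cH$, and fit the skew cross-pairing exactly. This forces the operator $\Lambda:\mathcal M\to\tilde{\mathcal M}$ representing $\mathcal B$ through the graph inner products to be a bounded isomorphism, whereas separate non-degeneracy alone yields only injectivity with dense range in infinite dimensions; I expect this to be the crux. To overcome it I would seek a two-sided a priori bound of the form $\|\xi\|_+\le c\,\sup\{|\mathcal B(\xi,\eta)|:\eta\in\tilde{\mathcal M},\ \|\eta\|_\sim\le 1\}$ on $\mathcal M$, together with its mirror on $\tilde{\mathcal M}$, which upgrades non-degeneracy to a bounded inverse via the open mapping theorem. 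Such a bound should follow from completeness and the rigid characterisation $\mathcal M=\{\xi\in D(\At^*):\At^*\xi\in D(A^*),\ A^*\At^*\xi=-\xi\}$, which one reads off from the orthogonality relation $(\At^*\xi,Aw)_H=-(\xi,w)_H$ for all $w\in D(A)$. Once $\Lambda$ is known to be an isomorphism the transport above is legitimate, and assembling the four maps delivers the Green formula (\ref{Green}), graph-boundedness, surjectivity of both pairs, and the domain identities (\ref{domains}).
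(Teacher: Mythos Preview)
The paper does not prove this proposition at all: it is quoted as a known result from Lyantze and Storozh \cite{Lyantze}, and no argument is given. There is therefore nothing to compare your attempt against.

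That said, your sketch is on a sound track and the ``hard part'' you flag is genuine; in fact the characterisation you wrote down is exactly the tool that closes the gap. From $A^*\At^*\xi=-\xi$ on $\mathcal M$ you get a concrete map $J:\mathcal M\to\tilde{\mathcal M}$, $J\xi:=\At^*\xi$: one checks $A^*(J\xi)=-\xi\in D(\At^*)$ and $\At^*A^*(J\xi)=-J\xi$, so $J\xi\in\tilde{\mathcal M}$, and the symmetric map $\tilde J\eta:=A^*\eta$ satisfies $\tilde J J=-I$. A direct computation then gives
\[
\mathcal B(\xi,J\xi)=\|\At^*\xi\|_H^2+\|\xi\|_H^2=\|\xi\|_+^2,\qquad \|J\xi\|_\sim^2=\|\At^*\xi\|_H^2+\|\xi\|_H^2=\|\xi\|_+^2,
\]
so $J$ is an isometric isomorphism and $|\mathcal B(\xi,\eta)|$ attains $\|\xi\|_+$ on the unit ball of $\tilde{\mathcal M}$. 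This is precisely the two-sided bound you were looking for, and it makes the Riesz operator $\Lambda$ a unitary between $\mathcal M$ and $\tilde{\mathcal M}$; the transport of the coordinate splitting then goes through as you describe. You should, however, spell out the choice of the decomposition $\mathcal M=\cH\oplus\cK$: as written it is arbitrary, and the sign pattern in (\ref{Green}) forces a specific skew block structure for the induced $(\Gammat_1,\Gammat_2)$ that you have not made explicit.
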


Malamud and Mogilevskii \cite{MM99,MM02} use this setting to define Weyl $M$-functions and $\gamma$-fields associated with boundary 
triplets and to obtain Kre\u{\i}n formulae for the resolvents. We now summarize some results, using however a slightly
different setting taken from \cite{kn:BMNW} in which the boundary conditions and Weyl function contain an additional operator $B\in\cL(\cK,\cH)$.

\begin{defn}\label{defmfn} Let $B\in\cL(\cK,\cH)$ and $\Bt\in\cL(\cH,\cK)$. We define extensions
of $A$ and $\At$ (respectively) by
\[ A_B:=\At^*\vert_{\ker(\Gamma_1-B\Gamma_2)} \hbox{ and } \At_\Bt:=A^*\vert_{\ker(\Gammat_1-\Bt\Gammat_2)}.\]  
In the following, we assume $\rho(A_B)\neq\emptyset$, in particular $A_B$ is a closed operator.
For $\lambda\in\rho(A_B)$, we define the $M$-function via
\[ M_B(\lambda):\Ran(\Gamma_1-B\Gamma_2)\to\cK,\ M_B(\lambda)(\Gamma_1-B\Gamma_2) u=\Gamma_2 u \hbox{ for all } u\in \ker(\At^*-\lambda)\]
and for $\lambda\in\rho(\At_\Bt)$, we define 
\[ \Mt_\Bt(\lambda):\Ran(\Gammat_1-\Bt\Gammat_2)\to\cH,\ \Mt_\Bt(\lambda)(\Gammat_1-\Bt\Gammat_2) v=\Gammat_2 v 
\hbox{ for all } v\in \ker(A^*-\lambda).\]
\end{defn}
It is easy to prove that $M_B(\lambda)$ and $\Mt_\Bt(\lambda)$ are well defined for $\lambda\in \rho(A_B)$ and $\lambda\in \rho(\At_\Bt)$
respectively.
\begin{defn} (Solution Operator) For $\lambda\in\rho(A_B)$, we define the linear operator $S_{\lambda,B}:\Ran(\Gamma_1-B\Gamma_2)\to 
\ker(\At^*-\lambda)$ by
\begin{eqnarray}\label{slamdef}
  (\At^*-\lambda)S_{\lambda,B} f=0,\ (\Gamma_1-B\Gamma_2)S_{\lambda,B} f=f,
\end{eqnarray}
i.e.  $S_{\lambda,B}=\left( (\Gamma_1-B\Gamma_2)\vert_{\ker(\At^*-\lambda)}\right)^{-1}$.
\end{defn}
Since we shall use solution operators quite extensively in the sequel, we include the proof of the following lemma, for completeness.
\begin{lem}\label{slamwd}
$S_{\lambda,B}$ is well-defined for $\lambda\in\rho(A_B)$. Moreover for each
$f\in\Ran(\Gamma_1-B\Gamma_2)$ the map from $\rho(A_B)\to H$ given by $\lambda\mapsto S_{\lambda,B} f$ is analytic.
\end{lem}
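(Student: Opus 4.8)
The plan is to prove both assertions simultaneously by deriving an explicit resolvent formula for $S_{\lambda,B}$. By definition $S_{\lambda,B}$ is the inverse of the restriction $(\Gamma_1-B\Gamma_2)\vert_{\ker(\At^*-\lambda)}$, so well-definedness amounts to showing that this restriction is a bijection from $\ker(\At^*-\lambda)$ onto $\Ran(\Gamma_1-B\Gamma_2)$. The single ingredient I would use throughout is that $\lambda\in\rho(A_B)$, so that $(A_B-\lambda)^{-1}\in\cL(H)$ exists and is bounded.

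For injectivity, suppose $u\in\ker(\At^*-\lambda)$ satisfies $(\Gamma_1-B\Gamma_2)u=0$. Then $u\in D(\At^*)\cap\ker(\Gamma_1-B\Gamma_2)=D(A_B)$ and $\At^* u=A_B u$, so $(A_B-\lambda)u=0$; since $\lambda\in\rho(A_B)$ this forces $u=0$. For surjectivity I would construct a preimage directly: given $f\in\Ran(\Gamma_1-B\Gamma_2)$, choose any $w\in D(\At^*)$ with $(\Gamma_1-B\Gamma_2)w=f$ and set
\[ u:=w-(A_B-\lambda)^{-1}(\At^*-\lambda)w. \]
Since $(A_B-\lambda)^{-1}(\At^*-\lambda)w\in D(A_B)\subseteq D(\At^*)$, one checks that $u\in D(\At^*)$, that $(\At^*-\lambda)u=0$ (using $\At^*\vert_{D(A_B)}=A_B$), and that $(\Gamma_1-B\Gamma_2)u=f$ (since the boundary operator annihilates $D(A_B)$). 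Combined with injectivity, this shows the restriction is a bijection onto $\Ran(\Gamma_1-B\Gamma_2)$, so $S_{\lambda,B}$ is well-defined, and moreover
\[ S_{\lambda,B}f=w-(A_B-\lambda)^{-1}(\At^*-\lambda)w. \]

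Analyticity then follows from this formula. Fixing $f$ and a $\lambda$-independent preimage $w$, the vector $(\At^*-\lambda)w=\At^* w-\lambda w$ is affine (hence analytic) in $\lambda$, while $\lambda\mapsto(A_B-\lambda)^{-1}$ is analytic on $\rho(A_B)$ by the resolvent identity. Thus $\lambda\mapsto S_{\lambda,B}f$ is a product of analytic maps and is itself analytic as an $H$-valued function.

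The only genuine care needed is the bookkeeping in the surjectivity step: verifying that the correction term lies in $D(A_B)$ so that both the eigenvalue equation and the boundary condition hold, and that the resulting $u$ is independent of the chosen preimage $w$ (which is immediate from the already-established injectivity). Everything else is a direct consequence of $\lambda\in\rho(A_B)$.
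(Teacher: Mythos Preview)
Your proof is correct and is essentially identical to the paper's: both establish injectivity/uniqueness via $\ker(\At^*-\lambda)\cap\ker(\Gamma_1-B\Gamma_2)\subseteq\ker(A_B-\lambda)=\{0\}$, both construct the solution via the same formula $S_{\lambda,B}f=w-(A_B-\lambda)^{-1}(\At^*-\lambda)w$ with a $\lambda$-independent preimage $w$, and both deduce analyticity directly from this representation. The only difference is cosmetic---you phrase it as ``injectivity plus surjectivity'' while the paper says ``existence plus uniqueness''.
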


\begin{proof}
For $f\in \Ran(\Gamma_1-B\Gamma_2)$, choose any $w\in D(\At^*)$ such that $(\Gamma_1-B\Gamma_2)w=f$. Let 
$v=-(A_B-\lambda)^{-1}(\At^*-\lambda)w$. Then $v+w\in\ker(\At^*-\lambda)$ and $(\Gamma_1-B\Gamma_2)(v+w)=(\Gamma_1-B\Gamma_2)w=f$, 
so a solution to (\ref{slamdef}) exists and is given by 
\begin{equation}
  S_{\lambda,B} f= \left( I-(A_B-\lambda)^{-1}(\At^*-\lambda)\right)w \label{eq:eff}
\end{equation}
for any $w\in D(\At^*)$ such that $(\Gamma_1-B\Gamma_2)w=f$. Moreover $S_{\lambda,B} f$ is well defined because the solution to 
(\ref{slamdef}) is unique. For suppose $u_1$ and $u_2$ are two solutions. Then $(u_1-u_2)\in\ker(\At^*-\lambda)\cap 
\ker(\Gamma_1-B\Gamma_2)$, so $u_1-u_2\in D(A_B)$ and $(A_B-\lambda)(u_1-u_2)=0$. As $\lambda\in\rho(A_B)$, $u_1=u_2$. 
The analyticity of $S_{\lambda,B}$ as a function of $\lambda$ is immediate from (\ref{eq:eff}) using the fact that the 
choice of $w$ does not depend on $\lambda$.
\end{proof}

\begin{cor} \label{corollary:1} Under the hypotheses of Lemma \ref{slamwd},
\be S_{\lambda,B} =  S_{\lambda_0,B} + (\lambda-\lambda_0)(A_B-\lambda)^{-1} S_{\lambda_0,B}. \label{eq:sdiff} \ee
\end{cor}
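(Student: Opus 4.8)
The plan is to verify directly that the operator appearing on the right-hand side of (\ref{eq:sdiff}) satisfies the two defining properties of $S_{\lambda,B}$ and then to invoke uniqueness. Fix $f\in\Ran(\Gamma_1-B\Gamma_2)$ and $\lambda,\lambda_0\in\rho(A_B)$, write $g:=S_{\lambda_0,B}f$, and set $u:=g+(\lambda-\lambda_0)(A_B-\lambda)^{-1}g$. Note that every term is well defined because $g\in\ker(\At^*-\lambda_0)\subseteq H$ and $\lambda\in\rho(A_B)$. My goal is to show $u=S_{\lambda,B}f$, that is, $(\At^*-\lambda)u=0$ together with $(\Gamma_1-B\Gamma_2)u=f$; the claimed identity then follows from the uniqueness of the solution of (\ref{slamdef}) already established in Lemma \ref{slamwd}.

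First I would check the eigenvalue equation. Writing $w:=(A_B-\lambda)^{-1}g$, we have $w\in D(A_B)$, and since $A_B\subseteq\At^*$ (as $A_B$ is by Definition \ref{defmfn} a restriction of $\At^*$), it follows that $\At^* w=A_B w=g+\lambda w$. Using also $\At^* g=\lambda_0 g$, which holds because $g\in\ker(\At^*-\lambda_0)$, a short computation gives
\[ \At^* u=\lambda_0 g+(\lambda-\lambda_0)(g+\lambda w)=\lambda\bigl(g+(\lambda-\lambda_0)w\bigr)=\lambda u, \]
so that $u\in\ker(\At^*-\lambda)$.

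Next I would check the boundary condition. Since $g=S_{\lambda_0,B}f$ we have $(\Gamma_1-B\Gamma_2)g=f$, while $w\in D(A_B)\subseteq\ker(\Gamma_1-B\Gamma_2)$ gives $(\Gamma_1-B\Gamma_2)w=0$; hence $(\Gamma_1-B\Gamma_2)u=f$. Both defining properties of $S_{\lambda,B}f$ now hold, so uniqueness yields $u=S_{\lambda,B}f$, which is precisely (\ref{eq:sdiff}).

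I do not expect a genuine obstacle here: the only two points requiring care are the inclusion $A_B\subseteq\At^*$, which lets us replace $\At^*$ by $A_B$ on $D(A_B)$ in the eigenvalue computation, and the inclusion $D(A_B)\subseteq\ker(\Gamma_1-B\Gamma_2)$, which ensures that the correction term $(\lambda-\lambda_0)(A_B-\lambda)^{-1}g$ contributes no boundary data. Both are immediate from Definition \ref{defmfn}, and the remainder is the standard resolvent-identity bookkeeping.
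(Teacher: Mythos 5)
Your proof is correct and follows essentially the same route as the paper: the paper simply substitutes $w=S_{\lambda_0,B}f$ into the representation $S_{\lambda,B}f=\bigl(I-(A_B-\lambda)^{-1}(\At^*-\lambda)\bigr)w$ from the proof of Lemma \ref{slamwd} and uses $(\At^*-\lambda)S_{\lambda_0,B}f=(\lambda_0-\lambda)S_{\lambda_0,B}f$, whereas you unroll that formula by re-verifying the two defining properties of $S_{\lambda,B}f$ and invoking uniqueness. The mathematical content --- the inclusion $A_B\subseteq\At^*$, the fact that $D(A_B)\subseteq\ker(\Gamma_1-B\Gamma_2)$, and uniqueness from Lemma \ref{slamwd} --- is identical.
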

\begin{proof}
Fix $\lambda_0\in\rho(A_B)$ and choose $w=S_{\lambda_0,B}f$. Then
\begin{eqnarray*}
  S_{\lambda,B} f & = & \left( S_{\lambda_0,B}-(A_B-\lambda)^{-1}(\At^*-\lambda)S_{\lambda_0,B}\right) f \\
            & = & S_{\lambda_0,B}f + (\lambda-\lambda_0)(A_B-\lambda)^{-1}S_{\lambda_0,B}f. \nonumber
\end{eqnarray*}
\vskip-1em
\end{proof}
Note that the identity (\ref{eq:sdiff}) may be regarded as a Hilbert identity for the difference of resolvents
corresponding to different boundary conditions.

To be able to study spectral properties of the operator $A_B$ via the $M$-function, we need to relate the $M$-function to the resolvent. This can be done in the following way:

\begin{thm}
\begin{enumerate}
 \item Let $\lambda,\lambda_0\in\rho(A_B)$. Then on $\Ran(\Gamma_1-B\Gamma_2)$
\begin{eqnarray*}
M_B(\lambda) &=& \Gamma_2\left( I+(\lambda-\lambda_0)(A_B-\lambda)^{-1}\right)S_{\lambda_0,B}\\ 
&=& \Gamma_2(A_B-\lambda_0)(A_B-\lambda)^{-1}S_{\lambda_0,B}.
\end{eqnarray*}
	\item Let $B,C\in \cL(\cK,\cH)$, $\lambda\in\rho(A_B)\cap\rho(A_C)$. Then 
\begin{eqnarray*}\label{Krein}
(A_B-\lambda)^{-1} &=& (A_C-\lambda)^{-1}-S_{\lambda,C}(I+(B-C)M_B(\lambda))(\Gamma_1-B\Gamma_2)(A_C-\lambda)^{-1}\\ \nonumber
                   &=& (A_C-\lambda)^{-1}-S_{\lambda,C}(I+(B-C)M_B(\lambda))(C-B)\Gamma_2(A_C-\lambda)^{-1}.
\end{eqnarray*}
\end{enumerate}
\end{thm}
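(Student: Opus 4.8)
The plan is to handle the two parts separately, reducing each to the defining relations for $M_B$ and $S_{\lambda,B}$ together with the Hilbert-type identity of Corollary~\ref{corollary:1}. For part~(1) I would first record that $M_B(\lambda)$ is exactly $\Gamma_2 S_{\lambda,B}$ on $\Ran(\Gamma_1-B\Gamma_2)$: for $f$ in this range the element $S_{\lambda,B}f$ lies in $\ker(\At^*-\lambda)$ and satisfies $(\Gamma_1-B\Gamma_2)S_{\lambda,B}f=f$, so the definition of $M_B$ gives $M_B(\lambda)f=M_B(\lambda)(\Gamma_1-B\Gamma_2)S_{\lambda,B}f=\Gamma_2 S_{\lambda,B}f$. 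Substituting the expression $S_{\lambda,B}=\left(I+(\lambda-\lambda_0)(A_B-\lambda)^{-1}\right)S_{\lambda_0,B}$ from Corollary~\ref{corollary:1} yields the first displayed equality, and the second is then just the operator identity $I+(\lambda-\lambda_0)(A_B-\lambda)^{-1}=(A_B-\lambda_0)(A_B-\lambda)^{-1}$ on $H$.

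For part~(2) I would fix $g\in H$ and set $u=(A_B-\lambda)^{-1}g$ and $v=(A_C-\lambda)^{-1}g$. Since $A_B$ and $A_C$ are both restrictions of $\At^*$, both $u$ and $v$ solve $(\At^*-\lambda)\,\cdot\,=g$, whence $u-v\in\ker(\At^*-\lambda)$. The crux is to identify the boundary data of $u-v$. Using $(\Gamma_1-C\Gamma_2)v=0$ (as $v\in D(A_C)$) and $\Gamma_1 u=B\Gamma_2 u$ (as $u\in D(A_B)$), I compute $(\Gamma_1-C\Gamma_2)(u-v)=(B-C)\Gamma_2 u$; by the uniqueness built into the solution operator this forces $u-v=S_{\lambda,C}(B-C)\Gamma_2 u$.

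It then remains to eliminate the unknown boundary term $\Gamma_2 u$ in favour of $v$. Applying $M_B(\lambda)$ to the identity $(\Gamma_1-B\Gamma_2)(u-v)=-(\Gamma_1-B\Gamma_2)v$ (valid because $(\Gamma_1-B\Gamma_2)u=0$) and using that $M_B(\lambda)(\Gamma_1-B\Gamma_2)$ acts as $\Gamma_2$ on $\ker(\At^*-\lambda)\ni u-v$, I obtain $\Gamma_2 u-\Gamma_2 v=-M_B(\lambda)(\Gamma_1-B\Gamma_2)v$. Inserting this, together with $(B-C)\Gamma_2 v=-(\Gamma_1-B\Gamma_2)v$ (again from $(\Gamma_1-C\Gamma_2)v=0$), into $u=v+S_{\lambda,C}(B-C)\Gamma_2 u$ collapses the expression to $u=v-S_{\lambda,C}\left(I+(B-C)M_B(\lambda)\right)(\Gamma_1-B\Gamma_2)v$, which is the first resolvent formula. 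The second form follows at once from the same observation $(\Gamma_1-B\Gamma_2)v=(C-B)\Gamma_2 v$, i.e.\ $(\Gamma_1-B\Gamma_2)(A_C-\lambda)^{-1}=(C-B)\Gamma_2(A_C-\lambda)^{-1}$ on $H$.

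The main obstacle I anticipate is purely bookkeeping rather than analytic: one must be scrupulous about which boundary condition each of $u$ and $v$ satisfies and about the domains of $M_B(\lambda)$ and $S_{\lambda,C}$, checking that every argument fed to $M_B(\lambda)$ or $S_{\lambda,C}$ genuinely lies in the appropriate range $\Ran(\Gamma_1-B\Gamma_2)$ or $\Ran(\Gamma_1-C\Gamma_2)$. No difficulty beyond the well-definedness already secured by Lemma~\ref{slamwd} should arise.
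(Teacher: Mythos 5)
Your proof is correct and takes essentially the same route as the paper: for part (2) both arguments rest on the observation that the difference of the two resolvents lies in $\ker(\At^*-\lambda)$, so it is recovered as $S_{\lambda,C}$ applied to its $(\Gamma_1-C\Gamma_2)$-data, with $M_B(\lambda)$ used to convert $\Gamma_2$ of that kernel element into boundary data of $(A_C-\lambda)^{-1}u$ (your elimination of the intermediate term $\Gamma_2 u$ is only a cosmetic reshuffling of the paper's direct computation). Your part (1) argument via $M_B(\lambda)=\Gamma_2 S_{\lambda,B}$ combined with Corollary~\ref{corollary:1} is the standard one; the paper omits it, citing Proposition 4.6 of an earlier reference.
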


\begin{proof} Part (1) is just Proposition 4.6 from \cite{kn:BMNW}, while part (2) is a slight improvement to Theorem 4.7 of the same paper. We include the proof of (2) for completeness:
Let $u\in H$. Set $v:=\left((A_B-\lambda)^{-1}-(A_C-\lambda)^{-1}\right)u$. Since $v\in\ker(\At^*-\lambda)$, we have 
$M_B(\lambda)(\Gamma_1-B\Gamma_2)v=\Gamma_2 v$. Then
\begin{eqnarray*}
\left(\Gamma_1-C\Gamma_2\right)v &=& \left[\Gamma_1-B\Gamma_2+(B-C)M_B(\lambda)(\Gamma_1-B\Gamma_2)\right] v\\ \nonumber
                          &=& (I+(B-C)M_B(\lambda))(\Gamma_1-B\Gamma_2) v\\ \nonumber
                          &=& -(I+(B-C)M_B(\lambda))(\Gamma_1-B\Gamma_2) (A_C-\lambda)^{-1}u.
\end{eqnarray*}
Set $f:=-(I+(B-C)M_B(\lambda))(\Gamma_1-B\Gamma_2) (A_C-\lambda)^{-1}u$. Then by the above calculation,  $f\in\Ran(\Gamma_1-C\Gamma_2)$
and $S_{\lambda, C}f=v=\left((A_B-\lambda)^{-1}-(A_C-\lambda)^{-1}\right)u$. Therefore, 
\begin{eqnarray*}
(A_B-\lambda)^{-1}&=& (A_C-\lambda)^{-1}-S_{\lambda, C}(I+(B-C)M_B(\lambda))(\Gamma_1-B\Gamma_2) (A_C-\lambda)^{-1}.	
\end{eqnarray*}
\end{proof}

\section{How much of an operator can its Weyl function determine?}\label{section:3}
In this section we wish to know how much of the spectrum of an operator
can be seen by its Weyl function. In the symmetric case, complete non-selfadjointness of the minimal operator $A$ is required to recover the operator (up to unitary equivalence) from the Weyl function (see e.g.~\cite{Ryzhov}). Motivated by this, we fix $\mu_0\not\in \sigma(A_B)$ and define the spaces
\be \Sc = \mbox{\rm Span}_{\delta\not\in \sigma(A_B)}(A_B-\delta I)^{-1}
        \mbox{Ran}(S_{\mu_0,B})\label{eq:mm1}, \ee
\be \Tc = \mbox{\rm Span}_{\mu\not\in \sigma(A_B)}
        \mbox{Ran}(S_{\mu,B})\label{eq:mm1b}, \ee
where $S_{\mu,B} = \left(\left.(\Gamma_1-B\Gamma_2)\right|_{\ker(\At^*-\mu I)}\right)^{-1}$
is the solution operator. Here $\mbox{\rm Span}$ denotes the set of finite linear combinations of 
vectors from the sets indicated.

The spaces $\Sc$ depend on the choice of $\mu_0$, but this dependence will not be indicated
explicitly. Moreover the closures of $\Sc$ do not depend on $\mu_0$, as the following lemma
shows.
\begin{lem}\label{lemma:mm0} Suppose that there exists a sequence $(z_n)$ in $\Cc$ which tends
to infinity and is such that the family of operators $(z_n(A_B-z_nI)^{-1})_{n\in\Nn}$ is 
bounded. Then 
\[ \overline{\Sc} = \overline{\Tc}. \]
In particular, $\overline{\Sc}$ does not depend on $\mu_0$.
\end{lem}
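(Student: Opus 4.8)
The plan is to prove the two inclusions $\overline{\Tc}\subseteq\overline{\Sc}$ and $\overline{\Sc}\subseteq\overline{\Tc}$ separately, using the resolvent-type identity (\ref{eq:sdiff}) of Corollary \ref{corollary:1} as the main algebraic tool, with the hypothesis on $(z_n)$ entering only in the first inclusion. Throughout write $R:=\Ran(S_{\mu_0,B})$, so that $\Sc=\mbox{\rm Span}_{\delta\notin\sigma(A_B)}(A_B-\delta I)^{-1}R$.

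For $\overline{\Sc}\subseteq\overline{\Tc}$ I would argue directly and without the hypothesis. A typical generator of $\Sc$ has the form $(A_B-\delta I)^{-1}g$ with $g=S_{\mu_0,B}f\in R$ and $\delta\notin\sigma(A_B)$. By (\ref{eq:sdiff}), for $\delta\neq\mu_0$ we have $(A_B-\delta I)^{-1}g=(\delta-\mu_0)^{-1}(S_{\delta,B}f-S_{\mu_0,B}f)$, and since both $S_{\delta,B}f$ and $S_{\mu_0,B}f$ lie in $\Tc$, so does $(A_B-\delta I)^{-1}g$. The remaining value $\delta=\mu_0$ is recovered by letting $\delta\to\mu_0$ and invoking the analyticity of $\delta\mapsto(A_B-\delta I)^{-1}g$ (Lemma \ref{slamwd}), whence the limit lies in $\overline{\Tc}$. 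Taking finite linear combinations yields $\Sc\subseteq\overline{\Tc}$, hence $\overline{\Sc}\subseteq\overline{\Tc}$.

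The converse inclusion $\overline{\Tc}\subseteq\overline{\Sc}$ is where the hypothesis is used, and is the main obstacle. A generator $S_{\mu,B}f$ of $\Tc$ decomposes via (\ref{eq:sdiff}) as $g+(\mu-\mu_0)(A_B-\mu I)^{-1}g$ with $g=S_{\mu_0,B}f\in R$; the second summand lies in $\Sc$, but the first summand $g$ is problematic, since $R$ is in general neither contained in $D(A_B)$ nor visibly in $\Sc$. To place $R$ inside $\overline{\Sc}$ I would establish that $z_n(A_B-z_n I)^{-1}\to -I$ strongly on $H$. First, the boundedness of $z_n(A_B-z_n I)^{-1}$ together with $z_n\to\infty$ gives $\|(A_B-z_n I)^{-1}\|=|z_n|^{-1}\|z_n(A_B-z_n I)^{-1}\|\to 0$. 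Next, from the algebraic identity $z_n(A_B-z_n I)^{-1}+I=A_B(A_B-z_n I)^{-1}$ one obtains, for $h\in D(A_B)$, that $z_n(A_B-z_n I)^{-1}h+h=(A_B-z_n I)^{-1}A_B h\to 0$; that is, convergence to $-I$ holds on the dense subspace $D(A_B)$, which is dense because $A\subseteq A_B$ (by (\ref{domains}), $D(A)\subseteq\ker(\Gamma_1-B\Gamma_2)$) and $A$ is densely defined. A standard uniform-boundedness argument then upgrades this to strong convergence on all of $H$.

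With the strong convergence in hand, for $g\in R$ the vectors $z_n(A_B-z_n I)^{-1}g$ belong to $\Sc$ and converge to $-g$, so $g\in\overline{\Sc}$; thus $R\subseteq\overline{\Sc}$. Returning to the decomposition $S_{\mu,B}f=g+(\mu-\mu_0)(A_B-\mu I)^{-1}g$, both summands now lie in $\overline{\Sc}$, so $\Tc\subseteq\overline{\Sc}$ and hence $\overline{\Tc}\subseteq\overline{\Sc}$. Combining the two inclusions gives $\overline{\Sc}=\overline{\Tc}$, and since $\Tc$ makes no reference to $\mu_0$, the independence of $\overline{\Sc}$ from $\mu_0$ follows at once.
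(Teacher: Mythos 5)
Your proof is correct and follows essentially the same route as the paper: both directions rest on the identity (\ref{eq:sdiff}), and the inclusion $\Tc\subseteq\overline{\Sc}$ is obtained exactly as in the paper by showing $z_n(A_B-z_nI)^{-1}\to -I$ strongly via uniform boundedness plus convergence on the dense set $D(A_B)$. The only cosmetic slip is citing Lemma \ref{slamwd} for the continuity of $\delta\mapsto(A_B-\delta I)^{-1}g$ (that lemma concerns $S_{\lambda,B}$; the resolvent's analyticity on $\rho(A_B)$ is standard), which does not affect the argument.
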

\begin{proof}
From the hypothesis that the operators $z_n(A_B-z_n I)^{-1}$ are bounded
it follows that the operators $A_B(A_B-z_nI)^{-1} = I + z_n(A_B-z_nI)^{-1}$ are uniformly
bounded. Let $\phi\in H$ be arbitrary. Given $\epsilon>0$ exploit the density of
$D(A_B)$ in $H$ to choose $\psi\in D(A_B)$ such that $\| \phi-\psi \| < \epsilon$. 
Now because $\psi\in D(A_B)$, it follows that $A_B(A_B-z_nI)^{-1}\psi = (A_B-z_n I)^{-1}A_B\psi$
and so
\[ \| A_B(A_B-z_n I)^{-1}\psi \| \leq \| A_B \psi\|\frac{\| z_n(A_B-z_n I)^{-1}\| }{ |z_n| } 
 \rightarrow 0 \;\;\; (n\rightarrow\infty). \]
Hence for all sufficiently large $n$, $\| A_B(A_B-z_n I)^{-1}\psi \| < \epsilon$. But we
know that the operators $A_B(A_B-z_n I)^{-1}$ are uniformly bounded, so for all sufficiently
large $n$
\[ \| A_B(A_B-z_n I)^{-1}\phi \| \leq  \| A_B(A_B-z_n I)^{-1} \| \| \phi - \psi \|
 + \| A_B(A_B-z_n I)^{-1}\psi \| < C\epsilon + \epsilon \]
for some $C>0$. Hence 
\[ \| A_B(A_B-z_n I)^{-1}\phi \| \rightarrow 0 \;\;\; (n\rightarrow \infty) \]
for each fixed $\phi \in H$. Similar arguments may be found in, e.g., \cite[Lemma II.3.4]{kn:engel-nagel}.

Let $\mu_0$ be as in the definition of $\Sc$ and let $\phi = S_{\mu_0,B} f$ for some $f$ in the
boundary space. Evidently $ \| A_B(A_B-z_n I)^{-1}\phi \| \rightarrow 0$ and so
\[ -z_n(A_B-z_n I)^{-1}S_{\mu_0,B} f \rightarrow S_{\mu_0,B} f. \]
It follows from the definition of $\Sc$ that $S_{\mu_0,B} f \in \overline{\Sc}$. Now
if $\mu$ is another point in the resolvent set of $A_B$ then the identity
\[ S_{\mu,B} = S_{\mu_0,B} + (\mu-\mu_0)(A_B-\mu)^{-1} S_{\mu_0,B} \]
from Corollary \ref{corollary:1}  immediately
shows that $S_{\mu,B}f$ lies in $\overline{\Sc}$ also. 
It follows that $\Tc \subseteq  \overline{\Sc}$ and hence $\overline{\Tc} \subseteq \overline{\Sc}$.

Next we show that if $f$ lies in the boundary space and $\mu$, $\delta$ do not lie in $\sigma(A_B)$
then $(A_B-\delta)^{-1}S_{\mu,B} f$ lies in $\overline{\Tc}$.  For this we again use the
formula (\ref{eq:sdiff}) which gives, for $\delta\neq \mu$,
\[ (A_B-\delta)^{-1}S_{\mu,B} f = \frac{1}{\delta-\mu}(S_{\delta,B} f - S_{\mu,B} f); \]
the right hand side of this expression obviously lies in $\Tc$. Taking the
limit as $\mu\rightarrow\delta$ it follows that $(A_B-\delta)^{-1}S_{\delta,B} f$
lies in $\overline{\Tc}$. Thus $\Sc \subseteq \overline{\Tc}$ and $\overline{\Sc}\subseteq
\overline{\Tc}$. 
\end{proof}

\begin{rem} In fact with some mild additional assumptions one may show that $\overline{\Sc}$ is 
generically independent of $B$ (as well as of $\mu_0$), using the identity
\[ S_{\mu_0,C}(I-(C-B)\Gamma_2 S_{\mu_0,B}) = S_{\mu_0,B} \]
from Proposition 4.5 of \cite{kn:BMNW}.
\end{rem}

\begin{rem}The hypothesis that one can choose $(z_n)$ tending to infinity such that $(z_n(A_B-z_nI)^{-1})_{n\in\Nn}$ 
is bounded holds in the case when the numerical range $\omega(A_B)$ is contained in a half plane, for in this case
the $z_n$ can be chosen so that
\[ \frac{z_n}{\mbox{dist}(z_n,\omega(A_B))} \]
is uniformly bounded in $n$.
\end{rem}

\begin{lem}\label{lemma:mm1}
The space $\overline{\Sc}$ is a regular invariant space of the resolvent of the operator $A_B$: that is, 
$\overline{(A_B-\mu I)^{-1}\overline{\Sc}} = \overline{\Sc}$ for all $\mu\in \rho(A_B)$.
\end{lem}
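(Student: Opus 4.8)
The plan is to prove $\overline{(A_B-\mu I)^{-1}\overline{\Sc}} = \overline{\Sc}$ for each $\mu\in\rho(A_B)$ by establishing the two set inclusions separately, working first at the level of the generators of $\Sc$ and then passing to closures via continuity of the resolvent. The key observation is that $\Sc$ is spanned by vectors of the form $(A_B-\delta I)^{-1}S_{\mu_0,B}f$, so I expect the resolvent identity and Corollary \ref{corollary:1} to do most of the work.

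First I would show $(A_B-\mu I)^{-1}\Sc\subseteq\overline{\Sc}$. A typical element of $\Sc$ is a finite linear combination of vectors $\phi_\delta=(A_B-\delta I)^{-1}S_{\mu_0,B}f$, so by linearity it suffices to treat $(A_B-\mu I)^{-1}\phi_\delta$. For $\mu\neq\delta$ the first resolvent identity gives
\[
(A_B-\mu I)^{-1}(A_B-\delta I)^{-1} = \frac{1}{\mu-\delta}\left[(A_B-\mu I)^{-1}-(A_B-\delta I)^{-1}\right],
\]
and applying this to $S_{\mu_0,B}f$ shows that $(A_B-\mu I)^{-1}\phi_\delta$ is again a finite linear combination of generators of $\Sc$, hence lies in $\Sc\subseteq\overline{\Sc}$. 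The case $\mu=\delta$ is handled by taking $\delta\to\mu$ in the above and using analyticity of $\delta\mapsto(A_B-\delta I)^{-1}S_{\mu_0,B}f$ (from Lemma \ref{slamwd} and the resolvent being analytic on $\rho(A_B)$), so the limit $(A_B-\mu I)^{-2}S_{\mu_0,B}f$ lies in $\overline{\Sc}$. Since $(A_B-\mu I)^{-1}$ is bounded, it maps $\overline{\Sc}$ into $\overline{(A_B-\mu I)^{-1}\Sc}\subseteq\overline{\Sc}$, giving $\overline{(A_B-\mu I)^{-1}\overline{\Sc}}\subseteq\overline{\Sc}$.

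For the reverse inclusion I would use the hypothesis (inherited from the setting of Lemma \ref{lemma:mm0}) that there is a sequence $z_n\to\infty$ with $z_n(A_B-z_nI)^{-1}$ bounded, so that $z_n(A_B-z_nI)^{-1}\to -I$ strongly, as established in the proof of Lemma \ref{lemma:mm0}. Given a generator $\phi_\delta$ of $\Sc$, I want to recover it as a limit of vectors in $(A_B-\mu I)^{-1}\overline{\Sc}$. Writing $\phi_\delta=(A_B-\mu I)^{-1}(A_B-\mu I)\phi_\delta$ is not directly available since $(A_B-\mu I)\phi_\delta$ need not lie in $\overline{\Sc}$; instead I would approximate using the resolvent identity in the other direction, expressing $(A_B-\mu I)^{-1}\psi_n$ for suitable $\psi_n\in\Sc$ converging to $\phi_\delta$. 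Concretely, from
\[
(A_B-\mu I)^{-1}S_{\mu_0,B}f = \frac{1}{\mu_0-\mu}\left[S_{\mu,B}f - S_{\mu_0,B}f\right]\cdot\text{(via Corollary \ref{corollary:1})}
\]
one sees each generator is itself in the image of the resolvent applied to elements of $\Sc$, which should close the argument after passing to closures.

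The main obstacle will be the reverse inclusion, specifically showing that every element of $\overline{\Sc}$ is approximable by $(A_B-\mu I)^{-1}$ applied to elements of $\overline{\Sc}$, because $A_B$ is unbounded and $(A_B-\mu I)^{-1}$ has a range that is only dense, not all of $H$. The clean way around this is precisely the strong convergence $z_n(A_B-z_nI)^{-1}\to -I$: for any $\phi\in\overline{\Sc}$, the vectors $-z_n(A_B-z_nI)^{-1}\phi$ converge to $\phi$ and each lies in the range of the resolvent, so I must verify they lie in $(A_B-\mu I)^{-1}\overline{\Sc}$, which follows by combining the resolvent identity between parameters $z_n$ and $\mu$ with the forward inclusion already proved. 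Assembling these limits carefully, while keeping track of which combinations stay inside $\overline{\Sc}$, is the delicate bookkeeping step.
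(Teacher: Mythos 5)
Your argument is correct, and your forward inclusion is essentially the paper's: the first resolvent identity turns $(A_B-\mu I)^{-1}(A_B-\delta I)^{-1}S_{\mu_0,B}f$ into a linear combination of generators of $\Sc$, and boundedness of the resolvent passes this to closures (your limiting treatment of the degenerate case $\mu=\delta_j$ is a small point the paper glosses over). For the reverse inclusion you take a genuinely different route. The paper argues directly: for an approximating element $f_N=\sum_j(A_B-\delta_{j,N}I)^{-1}S_{\mu_0,B}f_{j,N}$ it writes $f_N=(A_B-\mu I)^{-1}h_N$ with
\[
h_N=\sum_j\bigl\{S_{\mu_0,B}f_{j,N}+(\delta_{j,N}-\mu)(A_B-\delta_{j,N}I)^{-1}S_{\mu_0,B}f_{j,N}\bigr\},
\]
which lies in $\overline{\Sc}$ because $\Tc\subseteq\overline{\Sc}$ by Lemma \ref{lemma:mm0}. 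In particular your remark that ``$(A_B-\mu I)\phi_\delta$ need not lie in $\overline{\Sc}$'' is not right --- it always does, by exactly this computation, and that is the paper's shortcut. Your substitute --- approximating an arbitrary $\phi\in\overline{\Sc}$ by $-z_n(A_B-z_nI)^{-1}\phi$ and writing $(A_B-z_nI)^{-1}\phi=(A_B-\mu I)^{-1}\left[\phi+(z_n-\mu)(A_B-z_nI)^{-1}\phi\right]$, where the bracket lies in $\overline{\Sc}$ by the forward inclusion already proved --- is also valid, and it invokes the same standing hypothesis (boundedness of $z_n(A_B-z_nI)^{-1}$) that the paper needs through Lemma \ref{lemma:mm0}; it is slightly heavier but works for arbitrary $\phi\in\overline{\Sc}$ without picking a representation. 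Two blemishes to fix: the display you attribute to Corollary \ref{corollary:1} should read $(A_B-\mu I)^{-1}S_{\mu_0,B}f=\frac{1}{\mu-\mu_0}(S_{\mu,B}f-S_{\mu_0,B}f)$ (sign), and that identity shows the resolvent maps a generator of $\Tc$ into $\Tc$, not that each generator of $\Sc$ lies in the range of $(A_B-\mu I)^{-1}$ applied to $\overline{\Sc}$, so your ``concretely'' step does not close the argument by itself; it is your final $z_n$ paragraph that does.
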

\begin{proof} We start by showing that $(A_B-\mu I)^{-1}\Sc \subseteq \Sc$ for all $\mu\in \rho(A_B)$.
Choose $f$ of the form
\be f = \sum_{j=1}^N  (A_B-\delta_{j}I)^{-1}S_{\mu_0,B}f_{j} 
\label{eq:frep} \ee
for some functions $f_{j}$ in $\Hc$, and note that such $f$ are dense in $\overline{\Sc}$. It 
follows from the resolvent identity 
\be (A_B-\mu I)^{-1}(A_B-\delta I)^{-1} = \frac{1}{\mu-\delta}\{ 
 (A_B-\mu I)^{-1} -  (A_B-\delta I)^{-1}\} \label{eq:resolvent} \ee
that $(A_B-\mu I)^{-1}f$ also admits a representation of the form (\ref{eq:frep}); thus $(A_B-\mu)^{-1}f$ also 
lies in $\Sc$, giving $(A_B-\mu I)^{-1}\Sc \subseteq \Sc$.

Now suppose that $f$ lies in $\overline{\Sc}$. We can write $f=\lim_{N\rightarrow\infty}f_N$
where $f_N$ has the form
\[ f_N = \sum_{j=1}^N (A_B-\delta_{j,N}I)^{-1}S_{\mu_0,B}f_{j,N} \]
and so
\begin{eqnarray*}
 f_N & = & (A_B-\mu I)^{-1}\sum_{j=1}^N (A_B-\mu I) (A_B-\delta_{j,N}I)^{-1}S_{\mu_0,B}f_{j,N} \\
 & & \\
 & = & (A_B-\mu I)^{-1}\sum_{j=1}^N \left\{ S_{\mu_0,B}f_{j,N} 
 + (\delta_{j,N}-\mu)(A_B-\delta_{j,N}I)^{-1}S_{\mu_0,B}f_{j,N}\right\}. 
\end{eqnarray*}
Now the term $\sum_{j=1}^N S_{\mu_0,B}f_{j,N} $ lies in the space $\Tc$ of (\ref{eq:mm1b})
which is contained in $\overline{\Sc}$ by Lemma \ref{lemma:mm0}. Thus $f_N$ has the form
$(A_B-\mu I)^{-1} h_N$ for some $h_N\in \overline{\Sc}$. Hence $f$ lies in
$\overline{(A_B-\mu I)^{-1}\overline{\Sc}}$, in other words 
$\overline{\Sc} \subseteq \overline{(A_B-\mu I)^{-1}\overline{\Sc}}$. 
This completes the proof.
\end{proof}

Corresponding to the spaces $\Sc$ and $\Tc$ we define, from the formally adjoint \footnote{In fact we
showed in \cite{kn:BMNW} that $\At_{B^*}$ is the adjoint of $A_B$.} operators, the spaces
\be \tilde{\Sc} = \mbox{\rm Span}_{\delta\not\in \sigma(\tilde{A}_{B^*})}
 (\tilde{A}_{B^*}-\delta I)^{-1}\mbox{Ran}(\tilde{S}_{\tilde{\mu},B^*})\label{eq:tmm1}, \ee
\be \tilde{\Tc} = \mbox{\rm Span}_{\mu \not\in \sigma(\tilde{A}_{B^*})}
 \mbox{Ran}(\tilde{S}_{\mu,B^*})\label{eq:tmm1b}, \ee
where $\tilde{S}_{\mu,B^*} 
= \left(\left.(\tilde{\Gamma}_1-B^*\tilde{\Gamma}_2) \right|_{\ker(A^*-\mu I)}
\right)^{-1}$ is the corresponding solution operator. Once again, it may be shown that
$\overline{\tilde{\Sc}} = \overline{\Tc}$ and so $\overline{\tilde{\Sc}}$ does not depend
on $\tilde{\mu}$. 


We have so far defined the Weyl function $M_B(\cdot)$ on $\rho(A_B)$ where it is an analytic function. In what follows we will call a point $\lambda_0\in\mathbb{C}$ a point of analyticity of $M_B$ if all analytic continuations of $M_B$ coincide in a neighbourhood of $\lambda_0$.

\begin{thm}\label{theorem:1} 
Suppose that a point $\lambda_0$ is a point of analyticity of $M_B$ and is also a 
limit point of points of analyticity of $\lambda \mapsto (A_B-\lambda I)^{-1}$ -- that is, $\lambda\in
\overline{\rho(A_B)}$. Let $\Sc$ be as in (\ref{eq:mm1}) and, for positive integers $N$ and $M$,
let $P_{N,\Sc}$ and $P_{M,\tilde{\Sc}}$ denote projections onto any $N$ and $M$-dimensional
subspaces of $\Sc$ and $\tilde{\Sc}$ respectively. Then $P_{M,\tilde{\Sc}}(A_B-\lambda I)^{-1}P_{N,\Sc}$ 
is analytic at $\lambda=\lambda_0$. A similar result holds when one uses
projections $P_{N,\Tc}$ and $P_{M,\tilde{\Tc}}$ onto finite-dimensional subspaces of 
$\Tc$ and $\tilde{\Tc}$.
\end{thm}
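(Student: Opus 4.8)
The plan is to reduce the claim to finitely many scalar matrix elements and then exploit the structure of the generating vectors of $\Sc$ and $\tilde\Sc$, on which the resolvent acts with good analyticity properties. Since $P_{N,\Sc}$ and $P_{M,\tilde\Sc}$ project onto finite-dimensional subspaces, the operator $P_{M,\tilde\Sc}(A_B-\lambda I)^{-1}P_{N,\Sc}$ is a finite-rank operator, and its analyticity at $\lambda_0$ is equivalent to the analyticity at $\lambda_0$ of each of the scalar functions $\lambda\mapsto\left((A_B-\lambda I)^{-1}\phi,\psi\right)_H$, where $\phi$ ranges over a basis of the chosen subspace of $\Sc$ and $\psi$ over a basis of the chosen subspace of $\tilde\Sc$. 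So the first step is to fix such bases and observe that a generic basis vector of $\Sc$ is a finite linear combination of vectors of the form $(A_B-\delta I)^{-1}S_{\mu_0,B}f$, and likewise each $\psi$ is a finite combination of vectors $(\tilde A_{B^*}-\tilde\delta I)^{-1}\tilde S_{\tilde\mu,B^*}g$. By linearity it then suffices to establish analyticity of a single matrix element
\[
    \lambda\longmapsto \left((A_B-\lambda I)^{-1}(A_B-\delta I)^{-1}S_{\mu_0,B}f,\ (\tilde A_{B^*}-\tilde\delta I)^{-1}\tilde S_{\tilde\mu,B^*}g\right)_H.
\]

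Next I would use the resolvent identity to move the extra resolvent factors out of the way. Applying the Hilbert resolvent identity to $(A_B-\lambda I)^{-1}(A_B-\delta I)^{-1}$ reduces everything to the study of
\[
    \lambda\longmapsto \left((A_B-\lambda I)^{-1}S_{\mu_0,B}f,\ (\tilde A_{B^*}-\tilde\delta I)^{-1}\tilde S_{\tilde\mu,B^*}g\right)_H,
\]
up to terms that are manifestly analytic in $\lambda$ near $\lambda_0$ (the $(A_B-\delta I)^{-1}$ factor is a fixed bounded operator, and the $\frac{1}{\lambda-\delta}$-type coefficients are analytic away from $\delta$, which we may assume is chosen distinct from $\lambda_0$). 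The adjoint relation $(\tilde A_{B^*}-\tilde\delta I)^{-1}=\bigl((A_B-\overline{\tilde\delta}I)^{-1}\bigr)^*$ lets me pull the right-hand resolvent onto the left-hand vector, so the whole expression becomes, up to analytic correction terms, a pairing of $(A_B-\lambda I)^{-1}S_{\mu_0,B}f$ against a \emph{fixed} vector in $H$.

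The heart of the matter is then to show that $\lambda\mapsto\left((A_B-\lambda I)^{-1}S_{\mu_0,B}f,\chi\right)_H$ continues analytically across $\lambda_0$ whenever $\lambda_0$ is a point of analyticity of $M_B$. Here I would invoke Corollary \ref{corollary:1}, which gives
\[
    S_{\lambda,B}f = S_{\mu_0,B}f + (\lambda-\mu_0)(A_B-\lambda I)^{-1}S_{\mu_0,B}f,
\]
so that $(A_B-\lambda I)^{-1}S_{\mu_0,B}f=\frac{1}{\lambda-\mu_0}\bigl(S_{\lambda,B}f-S_{\mu_0,B}f\bigr)$. This converts the resolvent-bordered quantity into an expression built from the solution operator $S_{\lambda,B}$, whose analyticity is governed precisely by that of $M_B$: by the first part of the Theorem relating $M_B(\lambda)$ to $\Gamma_2$ applied to $S_{\lambda,B}$, the solution operator and the $M$-function share their singularities. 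Thus wherever $M_B$ is analytic, $\lambda\mapsto S_{\lambda,B}f$ admits an analytic continuation, and hence so does the bordered resolvent. The main obstacle I anticipate is the bookkeeping that justifies these continuations as genuinely \emph{the same} analytic object near $\lambda_0$: one must argue that the analytic continuation of $S_{\lambda,B}f$ obtained from the analyticity of $M_B$ agrees, on the resolvent set, with the resolvent formula, and that pairing with the fixed vector $\chi$ respects the continuation. The definition of ``point of analyticity'' adopted just before the Theorem — requiring all analytic continuations to coincide near $\lambda_0$ — is exactly what is needed to make this identification unambiguous, and the finite-dimensionality of the projections keeps the whole argument within the realm of scalar-valued analytic functions, avoiding any subtlety about operator-norm convergence on the non-closed spaces $\Sc$ and $\tilde\Sc$. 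The case of $\Tc$ and $\tilde\Tc$ follows by the same argument, using that $\Tc$ is spanned by the $S_{\mu,B}f$ directly, which removes the need for the initial resolvent-identity reduction.
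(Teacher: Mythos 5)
Your reduction to scalar matrix elements, and the use of the resolvent identity to strip off the extra factors $(A_B-\delta I)^{-1}$ and $(\tilde A_{B^*}-\tilde\delta I)^{-1}$, are fine and broadly parallel to what the paper does. The gap is in the step you yourself call the heart of the matter. From $M_B(\lambda)=\Gamma_2 S_{\lambda,B}$ one gets only one implication: analyticity of $\lambda\mapsto S_{\lambda,B}f$ implies analyticity of $M_B(\lambda)f$. The converse --- your claim that ``the solution operator and the $M$-function share their singularities'', so that analyticity of $M_B$ forces $\lambda\mapsto S_{\lambda,B}f$ to admit an analytic continuation as an $H$-valued function --- is false, because $\Gamma_2$ can annihilate the singular part. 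The paper's own example in Section \ref{section:1storder} is a direct counterexample: there $\Gamma_2=0$, so $M_B\equiv 0$ is entire, yet $S_{\lambda,B}f$ is a multiple of $e^{-i\lambda x}$, which lies in $L^2(0,\infty)$ only for $\Im\lambda<0$ and has no $H$-valued continuation across the real axis. Your reduction to ``a pairing of $(A_B-\lambda I)^{-1}S_{\mu_0,B}f$ against a fixed vector $\chi\in H$'' therefore proves too much: if that pairing were analytic for every fixed $\chi$ wherever $M_B$ is analytic, the left bordering by $\tilde{\Sc}$ would be superfluous; and in the Section \ref{section:1storder} example the quantity $\langle (A_B-\lambda)^{-1}S_{\mu,B}f,\chi\rangle=\frac{1}{\lambda-\mu}\left(\langle S_{\lambda,B}f,\chi\rangle-\langle S_{\mu,B}f,\chi\rangle\right)$ visibly fails to continue across $\Rr$ for generic $\chi\in L^2(0,\infty)$, since $\langle S_{\lambda,B}f,\chi\rangle$ is a Laplace transform of $\chi$ with, generically, the real axis as natural boundary.

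What the paper does instead, and what your outline is missing, is to keep the right-hand test vector in the $\lambda$-dependent form $(A^*-\overline{\lambda}I)v$ with $v\in D(A^*)$ and to invoke the Green-formula identity (\ref{eq:fund}), which expresses $\left(F-(A_B-\lambda)^{-1}(\At^*-\lambda)F,(A^*-\overline{\lambda}I)v\right)$ purely in terms of boundary data of $v$ and $M_B(\lambda)$; only afterwards is $v$ specialised to $\tilde S_{\tilde\mu,B^*}g$, so that $(A^*-\overline{\lambda}I)v=(\tilde\mu-\overline{\lambda})\tilde S_{\tilde\mu,B^*}g$ is a scalar multiple of a fixed element of $\tilde\Tc$. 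It is this identity, not any continuation of $S_{\lambda,B}$ itself, that transfers the analyticity of $M_B$ to the bordered resolvent, and it is exactly why the theorem holds only for test vectors drawn from $\tilde\Sc$ or $\tilde\Tc$. Your argument could be repaired by replacing the ``shared singularities'' step with this Green-formula computation --- for instance $\left(S_{\lambda,B}f,\tilde S_{\tilde\mu,B^*}g\right)=(\lambda-\overline{\tilde\mu})^{-1}\left[(f,\tilde\Gamma_2\tilde S_{\tilde\mu,B^*}g)_{\Hc}-(M_B(\lambda)f,g)_{\K}\right]$, whose right-hand side is manifestly analytic wherever $M_B$ is --- but as written the key step is unjustified and its conclusion is false.
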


\begin{proof}

Let $f\in \mbox{Ran}(\Gamma_1-B\Gamma_2)$ and let $F=S_{\mu,B}f$ for $\mu\in \rho(A_B)$. Then for each
$\lambda\in \Cc$,
\be (\At^*-\lambda I)F =(\mu-\lambda)F = (\mu-\lambda)S_{\mu,B}f. \label{eq:atf} \ee
From the resolvent identity (\ref{eq:resolvent}) it follows that for $\lambda,\delta \in \rho(A_B)$,
\begin{eqnarray*} (A_B-\lambda I)^{-1}(A_B-\delta I)^{-1}(\At^*-\lambda I)F \hspace{5cm} \\
 = \frac{\mu-\lambda}{\lambda-\delta}\left\{(A_B-\lambda I)^{-1}S_{\mu,B}f
 - (A_B-\delta I)^{-1}S_{\mu,B}f\right\} 
\end{eqnarray*}
and hence, replacing $(\At^*-\lambda I)F$ on the left hand side by 
$(\mu-\lambda)S_{\mu,B}f$ and the first copy of $S_{\mu,B}f$ on the right
hand side by $(\mu-\lambda)^{-1}(\At^*-\lambda I)F$, 
\begin{eqnarray*}
 (A_B-\lambda I)^{-1}[(A_B-\delta I)^{-1}S_{\mu,B}f] \hspace{5cm} \\
 = \frac{1}{(\mu-\lambda)(\lambda-\delta)}(A_B-\lambda)^{-1}(\At^*-\lambda I)F 
 - \frac{(A_B-\delta I)^{-1}}{\lambda-\delta}S_{\mu,B}f. 
\end{eqnarray*}
Let $v\in D(\A^*)$ and recall that $(\Gamma_1-B\Gamma_2)F=f$. The remainder of our proof relies heavily on the 
identity
\be \left(F-(A_B-\lambda)^{-1}(\At^*-\lambda)F,(A^*-\overline{\lambda}I)v\right)
 = -(f,\tilde{\Gamma}_2v)_{\Hc}
 + (M_B(\lambda)f,(\tilde{\Gamma}_1-B^*\tilde{\Gamma}_2)v)_{\K}
\label{eq:fund} \ee
which is eqn. (5.1) in \cite{kn:BMNW}. Note that on the right hand side of this equation,
the only $\lambda$-dependent term is $M_B(\lambda)$. Using this identity yields
\begin{equation} \label{eq:fund2} 
\begin{array}{c}\hspace{-3.5cm}\left((A_B-\lambda I)^{-1}[(A_B-\delta I)^{-1}S_{\mu,B}f],(A^*-\overline{\lambda}I)v\right) \\ \\
 \hspace{1.2cm} =  \frac{1}{(\mu-\lambda)(\lambda-\delta)} \left\{ \left(F,(A^*-\overline{\lambda}I)v\right)+(f,\tilde{\Gamma}_2v)_{\Hc}
 - (M_B(\lambda)f,(\tilde{\Gamma}_1-B^*\tilde{\Gamma}_2)v)_{\K}\right\} \\ \\
 \hspace{0.5cm}   -\frac{1}{\lambda-\delta}\left((A_B-\delta I)^{-1}S_{\mu,B}f,(A^*-\overline{\lambda}I)v\right) 
\end{array}
\end{equation}
If we now select $N$ points $\delta_j$ in the resolvent set of $A_B$ and $N$ functions $f_j$ in $\mbox{Ran}(\Gamma_1-B\Gamma_2)$,
and define
\[ \Phi := \sum_{j=1}^N(A_B-\delta_j I)^{-1}S_{\mu,B}f_j \in \Sc,\;\;\;  
   \Psi := \sum_{j=1}^N\frac{S_{\mu,B}f_j}{\lambda-\delta_j} \in \Tc, \]
\[ \Theta :=  \sum_{j=1}^N(A_B-\delta_j I)^{-1}   \frac{S_{\mu,B}f_j}{\lambda-\delta_j} \in \Sc, \;\;\;
   \phi :=  \sum_{j=1}^N f_j, \]
then we obtain, upon summing the identities (\ref{eq:fund2}) with $\delta\mapsto\delta_j$ and $f\mapsto f_j$,
\begin{equation} \label{eq:fund3} 
\begin{array}{c}\left((A_B-\lambda I)^{-1}\Phi,(A^*-\overline{\lambda}I)v\right)
 =  -\left(\Theta,(A^*-\overline{\lambda}I)v\right) \\
 \\
 \hspace{0.5cm} + \frac{1}{\mu-\lambda}\left\{ \left( \Psi ,(A^*-\overline{\lambda}I)v\right)
 + (\phi,\tilde{\Gamma}_2v)_{\Hc}
 - (M_B(\lambda)\phi,(\tilde{\Gamma}_1-B^*\tilde{\Gamma}_2)v)_{\K}\right\} 
\end{array}
\end{equation} 
We have thus developed from (\ref{eq:fund}) an expression in which $(A^*-\lambda)F$ has been replaced by 
the arbitrary element $\Phi$ of any finite-dimensional subspace of $\Sc$.  From the right hand side of the expression (\ref{eq:fund3}), since 
$M_B(\lambda)$ is analytic at $\lambda_0$ and since none of the $\delta_j$ is equal to $\lambda_0$,  
it follows that $((A_B-\lambda I)^{-1}\Phi,(A^*-\overline{\lambda}I)v)$ is analytic at $\lambda_0$.
Now the term $(\A^*-\overline{\lambda})v$ may also be turned into an arbitrary element $\tilde{\Phi}$ of 
any finite-dimensional subspace of $\tilde{\Sc}$ by similar reasoning, and so $((A_B-\lambda I)^{-1}\Phi,\tilde{\Phi})$ is analytic
at $\lambda_0$.

The reasoning is similar but slightly simpler when working with elements of $\Tc$.
\end{proof}

In the case of isolated spectral points this theorem can be generalized as follows.

\begin{thm}\label{theorem:2} 
Suppose that a point $\lambda_0$ is a point of analyticity of $M_B$ and that $\lambda_0$ is at worst an 
isolated singularity of $(A_B-\lambda I)^{-1}$ and suppose that the resolvent set $\rho(\tilde{A}_{B^*})$
has finitely many connected components. Let $P_{\overline{\Sc}}$ and $P_{\overline{\tilde{\Sc}}}$
denote orthogonal projections onto the closures of $\Sc$ and $\tilde{\Sc}$ respectively.
Then $P_{\overline{\tilde{\Sc}}}(A_B-\lambda I)^{-1}P_{\overline{\Sc}}$ 
is analytic at $\lambda=\lambda_0$.
\end{thm}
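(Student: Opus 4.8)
The plan is to localise the whole question at $\lambda_0$ by means of a Laurent expansion and to reduce the theorem to the vanishing of a \emph{bordered} principal part. Because $\lambda_0$ is at worst an isolated singularity of $\lambda\mapsto(A_B-\lambda I)^{-1}$, some punctured disc $0<|\lambda-\lambda_0|<r$ lies in $\rho(A_B)$ and the resolvent is a norm-holomorphic $\mathcal{L}(H)$-valued function there; it therefore has a Laurent expansion
\[ (A_B-\lambda I)^{-1}=\sum_{n\in\Zz}(\lambda-\lambda_0)^n R_n,\qquad R_n=\frac{1}{2\pi i}\oint_{|\lambda-\lambda_0|=s}\frac{(A_B-\lambda I)^{-1}}{(\lambda-\lambda_0)^{n+1}}\,d\lambda\in\mathcal{L}(H), \]
in which the terms with $n<0$ constitute the principal part (and $R_{-1}$ is, up to sign, the Riesz projection at $\lambda_0$). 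Since the projections are bounded and each $R_n$ is a bounded operator, it suffices to prove that $P_{\overline{\tilde{\Sc}}}R_nP_{\overline{\Sc}}=0$ for every $n<0$: the bordered resolvent would then coincide with $\sum_{n\geq0}(\lambda-\lambda_0)^n P_{\overline{\tilde{\Sc}}}R_nP_{\overline{\Sc}}$, a power series converging in norm on the whole disc and hence analytic at $\lambda_0$.

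First I would feed Theorem \ref{theorem:1} into this expansion. Since $\lambda_0$ is an isolated singularity, a punctured neighbourhood of it lies in $\rho(A_B)$, so $\lambda_0\in\overline{\rho(A_B)}$; together with the standing assumption that $\lambda_0$ is a point of analyticity of $M_B$, the hypotheses of Theorem \ref{theorem:1} hold. Applying that theorem to the one-dimensional subspaces spanned by $\Phi\in\Sc$ and $\tilde{\Phi}\in\tilde{\Sc}$, and reading off the single entry of the resulting rank-one bordered resolvent, shows that $\lambda\mapsto\left((A_B-\lambda I)^{-1}\Phi,\tilde{\Phi}\right)$ is analytic at $\lambda_0$ for every fixed $\Phi\in\Sc$ and $\tilde{\Phi}\in\tilde{\Sc}$. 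Matching this against the Laurent series, in which $\left((A_B-\lambda I)^{-1}\Phi,\tilde{\Phi}\right)=\sum_{n\in\Zz}(\lambda-\lambda_0)^n\left(R_n\Phi,\tilde{\Phi}\right)$, the absence of a principal part forces $\left(R_n\Phi,\tilde{\Phi}\right)=0$ for all $n<0$ and all such $\Phi,\tilde{\Phi}$.

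The remaining step upgrades this bilinear vanishing from the spanning sets to their closures. By construction $\Sc$ is dense in $\overline{\Sc}$ and $\tilde{\Sc}$ in $\overline{\tilde{\Sc}}$, and each $R_n$ with $n<0$ is bounded; continuity in each slot therefore yields $\left(R_n\phi,\tilde{\phi}\right)=0$ for all $\phi\in\overline{\Sc}$ and $\tilde{\phi}\in\overline{\tilde{\Sc}}$, that is, $R_n$ maps $\overline{\Sc}$ into $\overline{\tilde{\Sc}}^{\perp}$. This is exactly $P_{\overline{\tilde{\Sc}}}R_nP_{\overline{\Sc}}=0$, which completes the reduction. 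I regard this passage to the closures as the main obstacle, and the isolated-singularity hypothesis as the feature that makes it possible: it is precisely what guarantees that the coefficients $R_n$ are \emph{bounded}, so that the finite-rank conclusion of Theorem \ref{theorem:1} can be propagated to the infinite-dimensional subspaces $\overline{\Sc}$ and $\overline{\tilde{\Sc}}$. No such propagation would be available if one instead tried to push a strong limit $P_{N,\Sc}\to P_{\overline{\Sc}}$ through the genuinely unbounded operator $(A_B-\lambda I)^{-1}$ as $\lambda\to\lambda_0$.

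Finally, a word on the hypothesis that $\rho(\tilde{A}_{B^*})$ has finitely many components. In the localised argument above it does not enter the Laurent bookkeeping, because on the punctured disc the resolvent is single-valued of its own accord; it is rather a global device securing that the analytic continuations implicit in the notion of a point of analyticity --- in particular those produced on the adjoint side, where the left slot of Theorem \ref{theorem:1} is manufactured from $(A^*-\overline{\lambda}I)v$ via the adjoint solution operators $\tilde{S}_{\tilde{\mu},B^*}$ --- remain single-valued and do not fan out into infinitely many incompatible branches. In the present route its effect is therefore subsumed into the applicability of Theorem \ref{theorem:1}, and I would check carefully whether it is genuinely indispensable here or merely inherited from the more global continuation argument; in either case the Laurent computation is routine once the matrix-element analyticity of Theorem \ref{theorem:1} is in hand for all $\Phi\in\Sc$ and $\tilde{\Phi}\in\tilde{\Sc}$.
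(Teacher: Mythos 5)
Your proposal is correct, and it takes a genuinely different route from the paper's proof. The paper proceeds globally: it substitutes $v=\tilde{S}_{\tilde{\mu},B^*}g$ into (\ref{eq:fund3}), integrates the resulting matrix element around a small contour $\Gamma$ enclosing $\lambda_0$, obtains $\int_\Gamma\bigl((A_B-\lambda I)^{-1}\Phi,\hat{\Phi}\bigr)\,d\lambda=0$ first only for $\hat{\Phi}$ built from solution operators at points $\overline{\tilde{\mu}_j}$ \emph{outside} $\Gamma$, and then must show such $\hat{\Phi}$ are dense in $\overline{\tilde{\Sc}}$ --- this is where the hypothesis that $\rho(\tilde{A}_{B^*})$ has finitely many components is invoked, via uniqueness of analytic continuation of $\tilde{\mu}\mapsto\tilde{S}_{\tilde{\mu},B^*}$ within a component, before concluding with Morera's theorem. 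You instead localise via the Laurent expansion $(A_B-\lambda I)^{-1}=\sum_n(\lambda-\lambda_0)^nR_n$, use Theorem \ref{theorem:1} (whose hypotheses you correctly verify) to kill each coefficient $\bigl(R_n\Phi,\tilde{\Phi}\bigr)$, $n<0$, on the spanning sets, and then pass to the closures by density using only the boundedness of the $R_n$ --- the step the paper has to work hardest for. The one point worth making explicit is that Theorem \ref{theorem:1} gives analyticity in the sense of a \emph{removable singularity} of the function already defined on the punctured disc: this follows because the punctured disc is a connected subset of $\rho(A_B)$ on which identity (\ref{eq:fund3}) holds, so the analytic expression on its right-hand side agrees with the matrix element there and extends it across $\lambda_0$. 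Granting that, your argument is complete, is shorter than the paper's, and in fact proves a slightly stronger statement, since the finitely-many-components hypothesis is never used; your closing suspicion that this hypothesis is an artefact of the paper's approximation argument rather than intrinsic to the result appears to be justified.
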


\begin{proof} Assume that $\lambda\not\in \rho(A_B)$ otherwise the statement is trivial.
In eqn. (\ref{eq:fund3}) take $v = (\tilde{S}_{\tilde{\mu},B^*})g$ for any $g\in \mbox{Ran}(\Gammat_1-B^*\Gammat_2)$ and any $\tilde{\mu}$ not in
the spectrum of $\tilde{A}_B^*$. Then $(A^*-\overline{\lambda} I)v = (\tilde{\mu}-\overline{\lambda})(\tilde{S}_{\tilde{\mu},B^*})g$ and 
so from (\ref{eq:fund3}),
\begin{equation} \label{eq:fund4} 
\begin{array}{c}\left((A_B-\lambda I)^{-1}\Phi,(\tilde{S}_{\tilde{\mu},B^*})g\right)
 =  -\frac{1}{\overline{\tilde{\mu}}-\lambda}\left(\Theta,(A^*-\overline{\lambda})v\right) \\
 \\
 \hspace{0.5cm} + \frac{1}{(\overline{\tilde{\mu}}-\lambda)(\mu-\lambda)}\left\{ \left( \Psi ,(A^*-
  \overline{\lambda})v\right)
 + (\phi,\tilde{\Gamma}_2v)_{\Hc}
 - (M_B(\lambda)\phi,(\tilde{\Gamma}_1-B^*\tilde{\Gamma}_2)v)_{\K}\right\} 
\end{array}
\end{equation} 
Since $\tilde{\mu}$ lies in the resolvent set of $\tilde{A}_{B^*} = (A_B)^*$ we know that $\tilde{\mu}\neq \overline{\lambda_0}$. Let $\Gamma$ be any smooth closed contour surrounding $\lambda_0$, not enclosing $\mu$ or 
$\overline{\tilde{\mu}}$ and bounded away from the spectrum of $A_B$. Integrating (\ref{eq:fund4}) around
$\Gamma$ yields
\[ \int_\Gamma \left( (A_B-\lambda I)^{-1}\Phi, (\tilde{S}_{\tilde{\mu},B^*})g\right) d\lambda = 0. \]
It follows that for any $\hat{\Phi}$ having a representation of the form
\be \hat{\Phi} = \sum_{j=1}^M (\tilde{S}_{\tilde{\mu}_j,B^*})g_j \label{eq:tpf} \ee
in which the points $\overline{\tilde{\mu}_j}$ lie outside $\Gamma$, we have
\be \int_\Gamma \left( (A_B-\lambda I)^{-1}\Phi, \hat{\Phi} \right) d\lambda = 0. \label{eq:ephat} \ee
Consider now a general $\tilde{\Phi}$ in $\overline{\Sc} = \overline{\Tc}$. Given $\epsilon>0$, such a $\tilde{\Phi}$
can be approximated to accuracy $\epsilon$ by $\tilde{\Phi}_\epsilon$ of the form 
\be \tilde{\Phi}_\epsilon = \sum_{j=1}^M (\tilde{S}_{\tilde{\mu}_{j,\epsilon},B^*})g_{j,\epsilon} \label{eq:tpfe} \ee
in which the points $\overline{\tilde{\mu}_{j,\epsilon}}$ could, however, lie inside $\Gamma$. However the solution operator 
$\tilde{S}_{\tilde{\mu},B^*}$ is analytic for $\tilde{\mu}$ in the resolvent set $\rho(\tilde{A}_{B^*})$. If 
the curve $\Gamma$ is chosen in a sufficiently small neighbourhood of $\lambda_0$ then its image under complex 
conjugation, denoted $\overline{\Gamma}$, lies in a single connected component of the resolvent set 
$\rho(\tilde{A}_{B^*})$. Denote this connected component by ${\mathcal U}$ and choose any open set ${\mathcal O}$ in 
${\mathcal U}$ outside $\overline{\Gamma}$. The values of the analytic function 
$\tilde{\mu}\mapsto \tilde{S}_{\tilde{\mu},B^*}$ at any point in ${\mathcal U}$ (and hence, in particular, at
any points $\tilde{\mu}_{j,\epsilon}$ inside $\overline{\Gamma}$) are uniquely determined by the values of this function in 
${\mathcal O}$, so it must be possible to approximate $\tilde{\Phi}_\epsilon$ of the form (\ref{eq:tpf}) to accuracy 
$\epsilon$ by approximations of the form
\be \hat{\Phi}_\epsilon = \sum_{j=1}^{K} (\tilde{S}_{\zeta_{j,\epsilon},B^*})h_{j,\epsilon} \label{eq:tpf2} \ee
in which the points $\zeta_{j,\epsilon}$ either lie in ${\mathcal O}$ or in a completely different component
of the resolvent set $\rho(\tilde{A}_{B^*})$. We have $\| \tilde{\Phi}-\hat{\Phi}_\epsilon \| < 2\epsilon$ and 
we also have, from (\ref{eq:ephat}), 
\be \int_\Gamma \left( (A_B-\lambda I)^{-1}\Phi, \hat{\Phi}_\epsilon \right) d\lambda = 0. \label{eq:ephat2} \ee
Since the vectors $(A_B-\lambda I)^{-1}\Phi$ are uniformly bounded on $\Gamma$, which does not intersect
the spectrum of $A_B$, we can take limits in $\epsilon$ and obtain
\be \int_\Gamma \left( (A_B-\lambda I)^{-1}\Phi, \tilde{\Phi} \right) d\lambda = 0 \label{eq:ephat3} \ee
for all $\Phi\in \overline{\Sc}$, $\tilde{\Phi}\in \overline{\tilde{\Sc}}$. The result is now immediate
from Morera's theorem.
\end{proof}

\section{A first-order example}\label{section:4}
\label{section:1storder}
An obvious question arising from the previous section is whether or not the result 
of Theorem \ref{theorem:1} remains true if one omits projections onto finite dimensional subspaces: if $M_B(\lambda)$ 
is analytic at some point which is a non-isolated spectral point of $A_B$, is 
$P_{\overline{\tilde{S}}}(A_B-\lambda I)^{-1}P_{\overline{S}}$ also analytic at this point? A simple example
shows that this result is false.

Consider in $L^2(0,\infty)$ the operator $A = \tilde{A}$ given by $D(A)=H^1_0(0,\infty)$ with
\be Af = i\frac{df}{dx}. \label{eq:Adef} \ee
The operator $A$ is maximal symmetric and $D(A^*)=H^1(0,\infty)$.
Define the boundary spaces $\cH = \Cc$, $\cK = \{ 0 \}$, and 
boundary value operators $\Gamma_1$, $\Gamma_2$, $\Gammat_1$, $\Gammat_2$ by 
\be \Gamma_1 f = if(0), \;\;\; \Gammat_2 f = f(0). \label{eq:Gammadef} \ee
\be \Gammat_1 f = 0, \;\;\; \Gamma_2 f = 0. \label{eq:Gammadeft} \ee
It is easy to see that the pairs $(\Gamma_1,\Gamma_2)$ and $(\Gammat_1,\Gammat_2)$ are surjective
and a simple integration shows that 
\[ \left(A^*f,g\right) - \left(f,A^*g\right) = i f(0)\overline{g(0)} = \Gamma_1 f \overline{\Gammat_2g} -   \Gamma_2 f \overline{\Gammat_1g}. \]
Because $\Gammat_1$ and $\Gamma_2$ are trivial it follows immediately from the definitions that
\[ M_B(\lambda) = 0; \;\;\; \tilde{M}_{\tilde{B}}(\lambda) = -1/\tilde{B}. \]
Moreover, $\sigma(A_B)=\overline{\mathbb{C}^+}$.

Now we consider the space $\overline{\Tc}$, for simplicity in the case $B=0$. For $\Im(\mu)<0$ a typical element of 
$\Tc$ has the form $y_\mu = S_{\mu,0}f$ and therefore satisfies $iy_\mu' = \mu y_\mu$ with $y_\mu(0)=f$; in
other words, for some complex number $f$,
\[ y_\mu(x) = f\exp(-i\mu x). \]
Now suppose that $u\in \overline{\Tc}^\perp$. Then $(u,y_\mu)=0$ for all $\Im(\mu)<0$. This means
\[ \int_0^\infty u(x) \exp(i\mu x)dx = 0 \]
for all $\Im(\mu)<0$. Setting $\mu = \omega - ir$, $r>0$, we deduce that for all $\omega\in \Rr$
\[ \int_{0}^\infty u(x)\exp(-rx)\exp(i\omega x)dx = 0. \]
From inverse Fourier transformation this implies that \mbox{$u(x)\exp(-rx) = 0$} for all
$x$ and hence $u(x)\equiv 0$. Thus we have proved that for this example,
\[ \overline{\Tc} = \overline{\tilde{\Tc}} = L^2(0,\infty) \]
and so $(A_B-\lambda I)^{-1}$ is not reduced by the bordering projection operators $P_{\overline{\Tc}}$ and 
$P_{\overline{\tilde{\Tc}}}$. It follows that for this example, the set of singular points of the bordered
resolvent is strictly greater than the set of singular points of $M_B(\lambda)$.

\section{A Hain-L\"{u}st type example}\label{section:5}
In this section we consider a block operator matrix example in which $P_{\overline{\tilde{\Sc}}}(A_B-\lambda I)^{-1}P_{\overline{S}}$ 
has exactly the same singularities as $M_B(\lambda)$, even though some of these singularities are not isolated. In other
words, for the example which we present here, a stronger result holds than those available in Theorems \ref{theorem:1}
and \ref{theorem:2}. It is not yet clear to us what special properties of this example mean that, unlike for the example
of Section \ref{section:1storder}, better results hold here than those in Theorems \ref{theorem:1} and \ref{theorem:2}.

Let 
\begin{equation}
\At^* = \left(\begin{array}{cc} -\frac{d^2}{dx^2}+q(x) & w(x) \vspace{2pt}\\
 w(x) & u(x) \end{array}\right), 
\label{eq:hl1} 
\end{equation}
where $q$, $u$ and $w$ are complex-valued $L^\infty$-functions, and the domain of
the operator is given by
\begin{equation}
D(\At^*) = H^2(0,1)\times L^2(0,1). 
\label{eq:hl2} 
\end{equation}
Also let 
\begin{equation}
 A^* = \left(\begin{array}{cc} -\frac{d^2}{dx^2}+\overline{q(x)} & \overline{w(x)} \vspace{2pt}\\ 
 \overline{w(x)} & \overline{u(x)} \end{array}\right), \;\;\;
 \mbox{with $D(A^*)=D(\At^*)$}. 
\label{eq:hl3}
\end{equation}
It is then easy to see that
\begin{eqnarray}
\llangle \At^*\left(\begin{array}{c} y \\ z \end{array}\right),
 \left(\begin{array}{c} f \\ g \end{array}\right)\rrangle
 -  \llangle \left(\begin{array}{c} y \\ z \end{array}\right),
  A^*\left(\begin{array}{c} f \\ g \end{array}\right)\rrangle\nonumber & & \\
 & \hspace{-10cm} = & \hspace{-5cm}\llangle \Gamma_1\left(\begin{array}{c} y \\ z \end{array}\right),
 \Gamma_2\left(\begin{array}{c} f \\ g \end{array}\right)\rrangle
 - \llangle \Gamma_2\left(\begin{array}{c} y \\ z \end{array}\right),
 \Gamma_1\left(\begin{array}{c} f \\ g \end{array}\right)\rrangle, 
\label{eq:hl4}
\end{eqnarray}
where 
\[ \Gamma_1\left(\begin{array}{c} y \\ z \end{array}\right)
  = \left(\begin{array}{c} -y'(1) \\ y'(0) \end{array}\right),
\;\;\;
 \Gamma_2\left(\begin{array}{c} y \\ z \end{array}\right)
  = \left(\begin{array}{c} y(1) \\ y(0) \end{array}\right).
\]
Consider the operator
\begin{equation}
 A_{\alpha\beta} := \left. \At^*\right|_{\mbox{ker}(\Gamma_1-B\Gamma_2)},
\label{eq:hl5}
\end{equation}
where, for simplicity, 
\be B = \left(\begin{array}{cc} \cot\beta & 0 \\ 
                0 & -\cot\alpha \end{array}\right). \label{eq:Bdef} \ee
It is known \cite{ALMS} that 
\[ \sigma_{ess}(A_{\alpha\beta}) = \mbox{essran}(u) := \left\{ z \in \Cc \, | \forall \epsilon > 0, \;
 \mbox{\rm meas} \left(\{ x \in [0,1] \, | \, |u(x)-z|<\epsilon \} \right) > 0 \right\} . \]
This result is independent of the choice of boundary conditions. The measure used is Lebesgue.
Note also that $\sigma(A_{\alpha\beta})$ is not the whole of $\Cc$ for essentially bounded $q$, $u$ and $w$.
For future use we also define the set
\[ \Wc = \{ x \in [0,1] \, | \, w(x) \neq 0\}. \]
The function $w$ is defined only almost everywhere, but this is sufficient to
define $\Wc$ up to a set of measure zero, which can be neglected. 
 
We now calculate the function $M(\lambda) = \left(\begin{array}{cc} m_{11}(\lambda) & m_{12}(\lambda) \\
 m_{21}(\lambda) & m_{22}(\lambda) \end{array}\right)$ such that
\[ M(\lambda)(\Gamma_1 - B \Gamma_2)\left(\begin{array}{c} y \\ z 
 \end{array}\right) = \Gamma_2\left(\begin{array}{c} y \\ z 
 \end{array}\right) \]
for $ \left(\begin{array}{c} y \\ z  \end{array}\right)\in
 \mbox{ker}(\At^*-\lambda)$. In our calculation we assume
that $\lambda\not\in \sigma_{ess}(A_{\alpha\beta})$. The condition
$ \left(\begin{array}{c} y \\ z  \end{array}\right)\in
 \mbox{ker}(\At^*-\lambda)$ yields the equations
\[ -y'' + (q-\lambda)y + wz = 0;\;\;\
 wy + (u-\lambda)z = 0 \]
which, in particular, give
\begin{equation}
 -y'' + (q-\lambda)y + \frac{w^2}{\lambda-u}y = 0.
\label{eq:hl10}
\end{equation}
The linear space $\mbox{ker}(\At^*-\lambda)$ is thus
spanned by the functions 
$ \left(\begin{array}{c} y_1 \\ wy_1/(\lambda-u)  \end{array}\right)$
and 
$ \left(\begin{array}{c} y_2 \\ wy_2/(\lambda-u)  \end{array}\right)$
where $y_1$ and $y_2$ are solutions of the initial value problems
consisting of the differential equation (\ref{eq:hl10}) equipped
with initial conditions
\begin{equation}
 y_1(0) = \cos\alpha, \;\; y_1'(0) = \sin\alpha, \label{eq:hl10a}
\end{equation}
\begin{equation}
 y_2(0) = -\sin\alpha, \;\; y_2'(0) = \cos\alpha, \label{eq:hl10b}
\end{equation}
where $\alpha$ is as in (\ref{eq:Bdef}). A straightforward calculation shows that
\[ \left(\begin{array}{c} y(1) \\ y(0) \end{array}\right)
 = \left(\begin{array}{cc} m_{11}(\lambda) & m_{12}(\lambda) \\
 m_{21}(\lambda) & m_{22}(\lambda) \end{array}\right)
\left(\begin{array}{c} -y'(1) -\cos\beta\ y(1)/\sin\beta\\ y'(0)
 + \cos\alpha\ y(0)/\sin\alpha \end{array}\right). \]
Note that the $y_j$ depend on $x$ and $\lambda$ but that the
$\lambda$-dependence is suppressed in the notation, except when necessary. 
Another elementary calculation now shows that
\begin{equation}
 m_{11}(\lambda) = -\frac{y_2(1,\lambda)}{y_2'(1,\lambda) + 
 \cot\beta\ y_2(1,\lambda)}, 
\label{eq:hl7} 
\end{equation}
\begin{equation}
 m_{21}(\lambda) = m_{12}(\lambda) = \frac{\sin\alpha}{y_2'(1,\lambda) + 
 \cot\beta\ y_2(1,\lambda)}, 
\label{eq:hl8} 
\end{equation}
\begin{equation}
 m_{22}(\lambda) = \sin\alpha\cos\alpha+\sin^2\alpha\left\{
\frac{y_1'(1,\lambda) + \cot\beta\ y_1(1,\lambda)}{y_2'(1,\lambda) + 
 \cot\beta\ y_2(1,\lambda)}\right\}.
\label{eq:hl9} 
\end{equation}

As an aside, notice that all these expressions contain a denominator 
$y_2'(1,\lambda) +  \cot\beta\ y_2(1,\lambda)$ and that 
$\lambda\not\in \mbox{essran}(u|_{\Wc})$ is an eigenvalue precisely when
this denominator is zero. 

\begin{rem}\label{rem:poles}
For $\lambda\in\Cc\setminus \mbox{essran}(u|_{\Wc})$, 
the coefficient $w(x)^2/(u(x)-\lambda)$ in (\ref{eq:hl10}) is analytic as a function of $\lambda$. Therefore, the solutions $y_1$ and $y_2$ are analytic in $\lambda$.
The $M$-function may have an isolated 
pole at some point $\lambda$ if 
$y_2'(1,\lambda) +  \cot\beta\ y_2(1,\lambda)$
happens to be zero; such a pole will be an eigenvalue of the operator
$A_{\alpha\beta}$ and may or may not be embedded in the essential spectrum of the operator.
\end{rem}
As a consequence of this remark, the $M$-function can be analytic at points in the essential range of $u$, as long as those points are outside the essential range of $ u\vert_{\Wc}$:
\begin{lem} 
Apart from poles at eigenvalues of $A_{\alpha\beta}$, the $M$-function 
$M(\lambda)$ is analytic in the set $\Cc\setminus\essran\left(\left. u\right|_{\Wc}\right)$.
\end{lem}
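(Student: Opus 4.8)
The plan is to read the analyticity of $M(\lambda)$ directly off the explicit formulas (\ref{eq:hl7})--(\ref{eq:hl9}), using the analytic dependence of the solutions $y_1,y_2$ on $\lambda$ already recorded in Remark \ref{rem:poles}. First I would isolate the one genuinely substantive point, which is the reason the relevant set is $\essran(u|_{\Wc})$ and not the larger set $\essran(u)$. Fix $\lambda_0\in\Cc\setminus\essran(u|_{\Wc})$. By definition of the essential range of the restriction there is an $\eps>0$ with $|\lambda_0-u(x)|\ge 2\eps$ for almost every $x\in\Wc$. On $[0,1]\setminus\Wc$ the factor $w(x)^2$ vanishes, so the coefficient $w(x)^2/(u(x)-\lambda)$ in (\ref{eq:hl10}) is identically zero there and no singularity can develop even at values $\lambda$ lying in $\essran(u)$; on $\Wc$ the denominator stays bounded away from zero. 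Combining these, one has the $L^\infty$-convergent expansion
\[ \frac{w^2}{\lambda-u}=\sum_{n=0}^\infty (-1)^n(\lambda-\lambda_0)^n\,\frac{w^2}{(\lambda_0-u)^{n+1}}, \]
valid for $|\lambda-\lambda_0|<\eps$, each coefficient lying in $L^\infty(0,1)$. This exhibits $\lambda\mapsto w^2/(\lambda-u)$ as an analytic $L^\infty(0,1)$-valued function on all of $\Cc\setminus\essran(u|_{\Wc})$.

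Next I would invoke the classical theorem on analytic dependence of the solution of a linear initial value problem on a parameter. Since the coefficient of (\ref{eq:hl10}) is analytic in $\lambda$ in the sense just established, while the initial data (\ref{eq:hl10a})--(\ref{eq:hl10b}) are independent of $\lambda$, the four scalars $y_1(1,\lambda)$, $y_1'(1,\lambda)$, $y_2(1,\lambda)$, $y_2'(1,\lambda)$ are all analytic functions of $\lambda$ throughout $\Cc\setminus\essran(u|_{\Wc})$.

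Finally I would observe that every entry in (\ref{eq:hl7})--(\ref{eq:hl9}) is a quotient of these analytic functions sharing the single common denominator $D(\lambda):=y_2'(1,\lambda)+\cot\beta\,y_2(1,\lambda)$. Hence $M(\lambda)$ is analytic at every point of $\Cc\setminus\essran(u|_{\Wc})$ at which $D(\lambda)\ne 0$, and can fail to be analytic only at the zeros of $D$. By the aside preceding Remark \ref{rem:poles} these zeros are precisely the eigenvalues of $A_{\alpha\beta}$, so they are the only possible poles, which is exactly the assertion of the lemma.

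I do not anticipate a serious obstacle: once the $L^\infty$-analyticity of the coefficient is in place the conclusion is essentially bookkeeping on the explicit formulas combined with standard ODE theory. The only delicate step, and really the entire content of the lemma, is the first one — recognising that the vanishing of $w^2$ on $[0,1]\setminus\Wc$ is exactly what removes the singularities associated with $\essran(u)\setminus\essran(u|_{\Wc})$, so that the $M$-function can remain analytic at points of the essential spectrum lying off $\essran(u|_{\Wc})$.
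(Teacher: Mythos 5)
Your argument is correct and follows exactly the route the paper takes: the paper derives the lemma as an immediate consequence of Remark \ref{rem:poles} (analyticity of the coefficient $w^2/(\lambda-u)$ off $\essran(u|_{\Wc})$, hence of $y_1,y_2$, hence of the explicit formulas (\ref{eq:hl7})--(\ref{eq:hl9}) away from zeros of the common denominator, which are eigenvalues). You simply supply more detail than the paper does — the $L^\infty$-valued Neumann-series expansion and the explicit appeal to analytic parameter dependence for the initial value problem — which is a welcome elaboration rather than a different approach.
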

We now turn our attention to the behaviour of the resolvent $(A_{\alpha\beta}-\lambda I)^{-1}$
on the spaces $\Tc$ and $\overline{\Tc}$. 

\begin{thm}\label{theorem:4}
\be \overline{\Sc}=\overline{\Tc} \subseteq \left(\begin{array}{c} L^2(0,1) \\
                                                          L^2(\Wc) \end{array}\right). \label{eq:hltc}\ee
Moreover if $M_B(\lambda)$ is analytic at a point $\lambda$ not in
$\essran\left(\left. u\right|_{\Wc}\right)$ then 
\be \left(\begin{array}{c} y \\ z \end{array}\right) := (A_{\alpha\beta}-\lambda I)^{-1}
\left(\begin{array}{c} f \\ g \end{array}\right) \label{eq:ARS} \ee
admits analytic continuation for any $f\in L^2(0,1)$ and $g\in L^2(\Wc)$. 
\end{thm}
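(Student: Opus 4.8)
The plan is to treat the two assertions separately: first pin down the space $\overline{\Sc}=\overline{\Tc}$, and then establish the analytic continuation of the resolvent acting on data of the prescribed form. The guiding principle throughout is that prescribing $g\in L^2(\Wc)$ decouples the problem from the part of $\essran(u)$ produced by $[0,1]\setminus\Wc$.

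For the inclusion (\ref{eq:hltc}) I would unpack the definition of $\Tc$. A typical generator is $S_{\mu,B}f$ with $\mu\notin\sigma(A_{\alpha\beta})$, and $\Ran(S_{\mu,B})=\ker(\At^*-\mu)$; by the computation preceding (\ref{eq:hl10}) this kernel is spanned by the two vectors $\big(y_j,\;wy_j/(\mu-u)\big)^{T}$, $j=1,2$. Since $\mu\notin\sigma_{ess}(A_{\alpha\beta})=\essran(u)$, the quotient $wy_j/(\mu-u)$ is a genuine $L^2$-function, and it vanishes a.e.\ on $[0,1]\setminus\Wc$ because $w=0$ there; hence its second component lies in $L^2(\Wc)$ while its first lies in $H^2\subset L^2(0,1)$. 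Every element of $\Tc$ thus belongs to the closed subspace $L^2(0,1)\oplus L^2(\Wc)$, and the inclusion follows on taking closures. The equality $\overline{\Sc}=\overline{\Tc}$ is then Lemma \ref{lemma:mm0}, whose hypothesis I would verify by checking that the numerical range of $A_{\alpha\beta}$ lies in a half-plane -- a routine estimate using $q,u,w\in L^\infty$ together with the boundary contributions of the separated conditions $y'(0)+\cot\alpha\,y(0)=0$ and $y'(1)+\cot\beta\,y(1)=0$.

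For the continuation statement I would write $(A_{\alpha\beta}-\lambda)\binom{y}{z}=\binom{f}{g}$ as the system
\[ -y''+(q-\lambda)y+wz=f,\qquad wy+(u-\lambda)z=g, \]
and eliminate $z$. Setting $z:=\mathbb{1}_{\Wc}\,(g-wy)/(u-\lambda)$ -- legitimate because $g$ and $wy$ are supported in $\Wc$ and $|u-\lambda|$ is bounded below on $\Wc$ for $\lambda\notin\essran(u|_{\Wc})$ -- and substituting into the first equation yields the scalar boundary value problem
\[ -y''+\Big(q-\lambda+\frac{w^2}{\lambda-u}\Big)y=f-\frac{wg}{u-\lambda}, \]
subject to the separated conditions inherited from $\ker(\Gamma_1-B\Gamma_2)$. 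Exactly as in Remark \ref{rem:poles}, both the potential $w^2/(\lambda-u)$ and the right-hand side $f-wg/(u-\lambda)$ are $L^\infty$, respectively $L^2$, and depend analytically on $\lambda$ throughout $\Cc\setminus\essran(u|_{\Wc})$.

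It then remains to solve this scalar problem analytically. I would construct its solution from the fundamental solutions $y_1,y_2$ of (\ref{eq:hl10a})--(\ref{eq:hl10b}), analytic in $\lambda$ on $\Cc\setminus\essran(u|_{\Wc})$, via the standard Green's function, whose only denominator is the characteristic determinant $y_2'(1,\lambda)+\cot\beta\,y_2(1,\lambda)$ -- precisely the common denominator of $m_{11},m_{12},m_{22}$ in (\ref{eq:hl7})--(\ref{eq:hl9}). Hence $y$ is analytic exactly where this determinant is nonzero, i.e.\ where $M_B$ is analytic, and then $z=\mathbb{1}_{\Wc}(g-wy)/(u-\lambda)\in L^2(\Wc)$ is analytic there too; for $\lambda\in\rho(A_{\alpha\beta})$ the pair $(y,z)$ lies in $D(A_{\alpha\beta})$ and solves the system, so it coincides with $(A_{\alpha\beta}-\lambda)^{-1}\binom{f}{g}$, giving the desired continuation. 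I expect the main obstacle to be the bookkeeping around the factor $(u-\lambda)^{-1}$: one must argue that, although $\lambda$ may lie in $\essran(u)$ so that the full resolvent is genuinely singular there, the singularity is confined to $[0,1]\setminus\Wc$, where the numerator $g-wy$ vanishes, so that neither the $\Wc$-supported vector $z$ nor the scalar potential $w^2/(\lambda-u)$ ever feels it.
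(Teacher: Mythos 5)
Your proposal is correct, and its first half (the inclusion $\overline{\Tc}\subseteq L^2(0,1)\oplus L^2(\Wc)$ via the explicit form of $\ker(\At^*-\mu)$ and the vanishing of $w$ off $\Wc$) is essentially identical to the paper's argument; you are in fact slightly more careful than the paper in noting that the equality $\overline{\Sc}=\overline{\Tc}$ requires checking the half-plane/numerical-range hypothesis of Lemma \ref{lemma:mm0}, which is indeed routine here. For the analytic continuation you take a genuinely different, more computational route: you eliminate $z$, reduce to a scalar boundary value problem with $\lambda$-analytic potential $w^2/(\lambda-u)$ and right-hand side $f-wg/(u-\lambda)$, and build the continuation explicitly from the Green's function of $y_1,y_2$, observing that its only denominator $y_2'(1,\lambda)+\cot\beta\,y_2(1,\lambda)$ is exactly the denominator of the entries of $M_B$ (and, since $m_{12}$ has constant numerator $\sin\alpha$, analyticity of $M_B$ forces this denominator to be nonzero). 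The paper instead argues structurally: it decomposes $L^2(0,1)\oplus L^2(0,1)=H_1\oplus H_2$ with $H_1=L^2(0,1)\oplus L^2(\Wc)$ and $H_2=0\oplus L^2(\Wc^c)$, shows these reduce $A_{\alpha\beta}$, identifies $\sigma_{ess}(A_{\alpha\beta}|_{H_1})=\essran(u|_{\Wc})$, and invokes Remark \ref{rem:poles} to conclude that a point of analyticity of $M_B$ outside $\essran(u|_{\Wc})$ lies in $\rho(A_{\alpha\beta}|_{H_1})$. Your version buys an explicit formula for the continued resolvent and avoids the (implicit) appeal to the fact that the spectrum of the restricted operator outside its essential spectrum consists only of eigenvalues; the paper's version buys the stronger structural information that $H_1$ is a reducing subspace, which it then reuses in the corollary and in Proposition \ref{theorem:5a}. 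Both are sound.
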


\begin{proof}
Suppose that $(f_1,f_2)\in\Cc^2$ and that $\mu \in \rho(A_{\alpha\beta})$. Since $\mu$ does not lie in
the essential spectrum, it does not lie in the essential range of $u$, so $1/(u-\mu)$ is essentially
bounded. Consider the functions $y_\mu$, $z_\mu$ defined by
\[ \left(\begin{array}{c} y_\mu \\ z_\mu \end{array}\right) = S_{\mu,B}\left(\begin{array}{c} f_1 \\ f_2 \end{array}\right) ; \]
eliminating $z_\mu$ from these equations using
\be z_\mu = \frac{ w y_\mu}{u-\mu} \label{eq:zmu} \ee
we find that $y_\mu$ satisfies the ODE 
\be -y_\mu'' + (q-\mu)y_\mu + \frac{w^2}{\mu-u}y_\mu = 0 \label{eq:hmu} \ee
with boundary conditions $y_\mu'(1)+\cot(\beta)y_\mu(1) = -f_1$ and $y_\mu'(0) + \cot(\alpha)y_\mu(0)=f_2$.  The boundary
value problem for $y_\mu$ is uniquely solvable because $\mu\in \rho(A_{\alpha\beta})$ and so 
$y_{\mu}\in L^2(0,1)$.  It follows from (\ref{eq:zmu}) that $z_{\mu}\in L^2(\Wc)$.
This proves the inclusion (\ref{eq:hltc}).

We decompose the space 
\be
\left(\begin{array}{cc}L^2(0,1) \\ L^2(0,1)\end{array}\right)=\left(\begin{array}{cc}L^2(0,1) \\ L^2(\Wc)\end{array}\right)\bigoplus\left(\begin{array}{cc} 0 \\ L^2(\Wc^c)\end{array}\right)
\ee
where $\Wc^c=[0,1]\setminus \Wc$. Denote $H_1=\left(\begin{array}{cc}L^2(0,1) \\ L^2(\Wc)\end{array}\right)$ and $H_2=\left(\begin{array}{cc} 0 \\ L^2(\Wc^c)\end{array}\right)$.
We shall now show that these are reducing subspaces for the operator $A_{\alpha\beta}$.
It is clear that if $\left(\begin{array}{cc}h \\ g\end{array}\right)\in D(A_{\alpha\beta})$ then the projections of $\left(\begin{array}{cc}h \\ g\end{array}\right)$ onto $H_1$ and $H_2$ will also lie in the domain of the operator as $H_2\subseteq D(A_{\alpha\beta})$. The conditions $A_{\alpha\beta}P_{H_i}\left(\begin{array}{cc}h \\ g\end{array}\right)\in H_i$ when $\left(\begin{array}{cc}h \\ g\end{array}\right)\in D(A_{\alpha\beta})$ for $i=1,2$ are a simple calculation. Here $P_i$ denotes the orthogonal projection onto $H_i$.

We have $\sigma_{ess}(A_{\alpha\beta}|_{H_1}) = \mbox{essran}(u|_\Wc)$. By Remark \ref{rem:poles}, any eigenvalue of the operator $A_{\alpha\beta}|_{H_1}$ will be a pole of $M_B(\lambda)$. 
Hence, if $M_B(\lambda)$ is analytic at a point $\lambda$ not in
$\essran\left(\left. u\right|_{\Wc}\right)$, we have that $\lambda\in\rho(A_{\alpha\beta}|_{H_1})$ and for 
any $\left(\begin{array}{cc}f \\ g\end{array}\right)\in H_1$, $ (A_{\alpha\beta}-\lambda I)^{-1}
\left(\begin{array}{c} f \\ g \end{array}\right)  $ admits analytic continuation.
\end{proof}

As an immediate corollary of this theorem we have
\begin{cor} 
For  $\lambda\not\in \mbox{\rm essran}\left(\left. u\right|_{\Wc}\right)$ the bordered resolvent
$P_{\overline{\tilde{\Sc}}}(A_{\alpha\beta}-\lambda I)^{-1}P_{\overline{\Sc}}$
is analytic precisely where $M_B(\lambda)$ is analytic.
\end{cor}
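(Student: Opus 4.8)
The plan is to split ``analytic precisely where $M_B$ is analytic'' into its two halves and to work throughout in $\Omega:=\Cc\setminus\essran(u|_\Wc)$, exploiting the reducing decomposition $H=H_1\oplus H_2$ from the proof of Theorem \ref{theorem:4}. On $\Omega$ the part $A_{\alpha\beta}|_{H_1}$ has no essential spectrum, so its only spectrum there consists of the eigenvalues cut out by the vanishing of $y_2'(1,\lambda)+\cot\beta\,y_2(1,\lambda)$, and by Remark \ref{rem:poles} these are exactly the poles of $M_B$. Thus within $\Omega$ the non-analyticity points of $M_B$ coincide with the eigenvalues of $A_{\alpha\beta}|_{H_1}$, and the task is to match these against the singularities of the bordered resolvent.

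The forward half is immediate from Theorem \ref{theorem:4}. Since $\overline{\Sc}\subseteq H_1$, every vector in the range of $P_{\overline{\Sc}}$ has the form $(f,g)$ with $f\in L^2(0,1)$ and $g\in L^2(\Wc)$; at any $\lambda_0\in\Omega$ at which $M_B$ is analytic, Theorem \ref{theorem:4} gives analytic continuation of $\lambda\mapsto(A_{\alpha\beta}-\lambda I)^{-1}(f,g)$, hence of $\lambda\mapsto(A_{\alpha\beta}-\lambda I)^{-1}P_{\overline{\Sc}}$, and composing on the left with the fixed bounded projection $P_{\overline{\tilde\Sc}}$ preserves analyticity. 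So the bordered resolvent is analytic wherever $M_B$ is.

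For the converse I would show that a pole $\lambda_0\in\Omega$ of $M_B$ remains a singularity after bordering. Such a $\lambda_0$ is an eigenvalue of $A_{\alpha\beta}|_{H_1}$, at which $(A_{\alpha\beta}-\lambda I)^{-1}$ has a Laurent principal part built from the Riesz projection $P_0$ onto the eigenspace, which sits inside $H_1$. Since $H_1$ reduces the operator and $\overline{\Sc},\overline{\tilde\Sc}\subseteq H_1$, the bordered resolvent coincides with $P_{\overline{\tilde\Sc}}(A_{\alpha\beta}|_{H_1}-\lambda I)^{-1}P_{\overline{\Sc}}$ and its residue is a nonzero multiple of $P_{\overline{\tilde\Sc}}P_0P_{\overline{\Sc}}$. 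To see this does not vanish I would locate the eigenfunction inside $\overline{\Sc}$ by letting $\mu\to\lambda_0$ in the identity $S_{\mu,B}=S_{\mu_0,B}+(\mu-\mu_0)(A_{\alpha\beta}-\mu I)^{-1}S_{\mu_0,B}$ of Corollary \ref{corollary:1}: the vectors $(\mu-\lambda_0)S_{\mu,B}f\in\Tc$ converge to $-(\lambda_0-\mu_0)P_0S_{\mu_0,B}f\in\overline{\Tc}=\overline{\Sc}$, a vector lying in the eigenspace which for suitable $f$ is a nonzero multiple of the eigenfunction $\psi_0$; the same argument on the adjoint triple places the adjoint eigenfunction $\tilde\psi_0$ of $\tilde{A}_{B^*}=(A_{\alpha\beta})^*$ at $\overline{\lambda_0}$ inside $\overline{\tilde\Sc}$.

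The step I expect to be the main obstacle is precisely the nonvanishing of $P_{\overline{\tilde\Sc}}P_0P_{\overline{\Sc}}$. For a first-order pole $P_0=\langle\,\cdot\,,\tilde\psi_0\rangle\,\psi_0/\langle\psi_0,\tilde\psi_0\rangle$, so once $\psi_0\in\overline{\Sc}$ and $\tilde\psi_0\in\overline{\tilde\Sc}$ are known the residue is nonzero exactly when $\langle\psi_0,\tilde\psi_0\rangle\neq0$; I would establish this non-degeneracy from the simplicity of the zero of $y_2'(1,\lambda)+\cot\beta\,y_2(1,\lambda)$ (equivalently, the non-orthogonality of $\tilde\psi_0$ to some $\ker(\At^*-\mu_0)$, which is where the conjugate symmetry of the coefficients of this example enters), handling any higher-order pole by retaining the full principal part rather than the leading residue alone. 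A nonzero residue exhibits a genuine pole of the bordered resolvent at $\lambda_0$, so it cannot be analytic there, which together with the forward half yields the ``precisely'' in the statement.
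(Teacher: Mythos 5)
Your first half coincides with the paper's: $\overline{\Sc}\subseteq H_1$ by Theorem \ref{theorem:4}, the resolvent restricted to $H_1$ continues analytically wherever $M_B$ does, and bordering by the fixed orthogonal projections preserves this. For the converse, however, you have taken a much harder road than the one the paper uses, and it contains a genuine gap. The paper's converse is essentially one line: in the identity (\ref{eq:fund3}), every term except $(M_B(\lambda)\phi,(\tilde{\Gamma}_1-B^*\tilde{\Gamma}_2)v)_{\K}$ is either a matrix element of the bordered resolvent (take $v=\tilde{S}_{\tilde{\mu},B^*}g$, so that $(A^*-\overline{\lambda}I)v=(\tilde{\mu}-\overline{\lambda})\tilde{S}_{\tilde{\mu},B^*}g\in\tilde{\Tc}$) or is manifestly analytic near $\lambda_0$; solving (\ref{eq:fund3}) for the $M_B$ term and using the surjectivity of the boundary maps onto the two-dimensional boundary space shows that analyticity of the bordered resolvent forces analyticity of $M_B$. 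No residue analysis is needed.

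Your route instead requires the bordered principal part at a pole $\lambda_0$ of $M_B$ to be nonzero, and that is exactly the point you leave open. Three separate facts are being assumed rather than proved: (i) that $P_0S_{\mu_0,B}f\neq 0$ for some $f$, i.e.\ that $\tilde\psi_0$ is not orthogonal to $\ker(\At^*-\mu_0)$ --- without this your limiting argument in Corollary \ref{corollary:1} only produces the zero vector and does not place $\psi_0$ in $\overline{\Sc}$; (ii) that $\langle\psi_0,\tilde\psi_0\rangle\neq 0$, which amounts to algebraic simplicity of the eigenvalue and is not automatic for this non-selfadjoint problem (the zero of $y_2'(1,\lambda)+\cot\beta\,y_2(1,\lambda)$ may have higher order, producing a Jordan block), nor is the asserted equivalence with simplicity of that zero established for a $\lambda$-rational coefficient; and (iii) the higher-order-pole case, which you explicitly defer. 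Each of these is plausible for this particular example, but Section \ref{section:6} of the paper shows that hypotheses of type (i)--(ii) can genuinely fail in closely related examples, so they cannot be waved through. I would replace your entire converse argument by the appeal to (\ref{eq:fund3}).
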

\begin{proof}
Since $(A_{\alpha\beta}-\lambda I)^{-1}|_{H_1}$ is analytic on the space $H_1$ which is 
larger than $\overline{\Sc}$ by Theorem \ref{theorem:4}, it is immediate that the bordered
resolvent is analytic wherever $M_B(\cdot)$ is analytic. The fact that $M_B(\cdot)$ is analytic
whenever the bordered resolvent is analytic follows from (\ref{eq:fund3}).
\end{proof}

\begin{rem}
Generically one expects that $M_B(\cdot)$ will not be analytic at points in 
$\mbox{\rm essran}\left(\left. u\right|_{\Wc}\right)$. The analyticity or otherwise depends
on the analyticity or otherwise of solutions of the ODE (\ref{eq:hl10}).
\end{rem}
It is worth mentioning also the following result.
\begin{prop}\label{theorem:5a} 
Let $\lambda$ be any fixed point in the resolvent set $\rho(A_{\alpha\beta})$. Then 
\be \overline{(A_{\alpha\beta}-\lambda I)^{-1}\left(\begin{array}{c} L^2(0,1) \\
                                                          L^2(\Wc) \end{array}\right)}
 = \left(\begin{array}{c} L^2(0,1) \\
                                                          L^2(\Wc) \end{array}\right). 
 \label{eq:reduce} \ee
\end{prop}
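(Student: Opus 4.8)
The plan is to reduce the identity (\ref{eq:reduce}) to a density statement for the domain of the part of $A_{\alpha\beta}$ in the reducing subspace $H_1=\left(\begin{array}{c}L^2(0,1)\\ L^2(\Wc)\end{array}\right)$ constructed in the proof of Theorem \ref{theorem:4}. Since $H_1$ reduces $A_{\alpha\beta}$, we have $\rho(A_{\alpha\beta})\subseteq\rho(A_{\alpha\beta}|_{H_1})$, the resolvent $(A_{\alpha\beta}-\lambda I)^{-1}$ leaves $H_1$ invariant, and its restriction to $H_1$ is the resolvent of the part $A_{\alpha\beta}|_{H_1}$. First I would record the elementary fact that, for a closed operator with $\lambda$ in its resolvent set, the resolvent carries the whole space bijectively onto the domain; combined with reducibility this gives
\[ (A_{\alpha\beta}-\lambda I)^{-1}H_1 = D(A_{\alpha\beta})\cap H_1 = D(A_{\alpha\beta}|_{H_1}). \]
Thus (\ref{eq:reduce}) is equivalent to the assertion that $D(A_{\alpha\beta}|_{H_1})$ is dense in $H_1$.

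The second, and main, step is to verify this density directly. An element of $D(A_{\alpha\beta})\cap H_1$ is a pair $\left(\begin{array}{c} y\\ z\end{array}\right)$ with $y\in H^2(0,1)$ satisfying the separated boundary conditions $y'(1)+\cot\beta\, y(1)=0$ and $y'(0)+\cot\alpha\, y(0)=0$ encoded by $\ker(\Gamma_1-B\Gamma_2)$, and with $z\in L^2(\Wc)$. The decisive observation is that these conditions constrain only the first component $y$ and place no restriction whatsoever on $z$. Consequently the domain contains the product $C_0^\infty(0,1)\times L^2(\Wc)$, because every $y\in C_0^\infty(0,1)$ vanishes together with its derivative near each endpoint and hence trivially satisfies both separated conditions. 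Since $C_0^\infty(0,1)$ is dense in $L^2(0,1)$, while $L^2(\Wc)$ is the entire second component of $H_1$, this product is dense in $H_1$, which establishes the required density.

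An equally quick abstract route, which I would mention as an alternative, uses that $A_{\alpha\beta}=A_B$ extends the densely defined operator $A$ (indeed $D(A)\subseteq\ker(\Gamma_1-B\Gamma_2)$) and is therefore densely defined; for a reducing subspace one has $D(A_{\alpha\beta})\cap H_1=P_{H_1}D(A_{\alpha\beta})$, and this is dense in $H_1$ because $P_{H_1}$ is a continuous surjection onto $H_1$ and $D(A_{\alpha\beta})$ is dense in $H$. I do not expect a genuine obstacle here: the only points requiring care are the two inclusions needed to identify $(A_{\alpha\beta}-\lambda I)^{-1}H_1$ with $D(A_{\alpha\beta})\cap H_1$ (both directions relying on the reducibility from Theorem \ref{theorem:4}) and the remark that $\lambda$ remains in the resolvent set of the restriction, both of which are routine consequences of the reducing decomposition already established.
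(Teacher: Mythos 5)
Your proof is correct, but it follows a genuinely different route from the paper's. The paper argues constructively: given a target $\left(\begin{array}{c} y \\ z\end{array}\right)$ with $y$ ranging over the dense domain of the scalar Sturm--Liouville problem for $-\frac{d^2}{dx^2}+q-\lambda-\frac{w^2}{u-\lambda}$ with the $\alpha,\beta$ boundary conditions, it explicitly manufactures a preimage $\left(\begin{array}{c} f \\ g\end{array}\right)$ by first choosing $g$ from the relation $z=\frac{1}{u-\lambda}(g-wy)$ and then $f=h+wg/(u-\lambda)$, checking along the way that $g\in L^2(\Wc)$ and, for the reverse inclusion, that $g\in L^2(\Wc)$ forces $z\in L^2(\Wc)$. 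You instead exploit the fact, already established in the proof of Theorem \ref{theorem:4}, that $H_1$ is a reducing subspace, so that $(A_{\alpha\beta}-\lambda I)^{-1}H_1=D(A_{\alpha\beta})\cap H_1$, and then observe that this set contains $C_0^\infty(0,1)\times L^2(\Wc)$ (the boundary conditions constrain only the first component) and is therefore dense in $H_1$; your alternative remark that $D(A_{\alpha\beta})\cap H_1=P_{H_1}D(A_{\alpha\beta})$ is dense because $D(A_{\alpha\beta})$ is dense in $H$ is equally valid and shows the statement is really a general fact about reducing subspaces of densely defined operators. What your approach buys is brevity and conceptual clarity, at the cost of depending on the reducibility lemma; what the paper's computation buys is self-containedness and an explicit description of the preimages, which makes visible exactly where the hypothesis $g\in L^2(\Wc)$ (rather than $g\in L^2(0,1)$) is used. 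Both arguments are complete; there is no gap in yours.
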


\begin{proof}
The domain of the differential expression
\[ -\frac{d^2}{dx^2}+q-\lambda - \frac{w^2}{u-\lambda} \]
equipped with boundary conditions $y(1)+\cot(\beta)y(1) = 0$ and $y'(0) + \cot(\alpha)y(0)=0$, is dense in
$L^2(0,1)$. Thus any function in $L^2(0,1)$ can be approximated to arbitrary accuracy by a solution $y$ of
a boundary value problem
\[ (-\frac{d^2}{dx^2}+q-\lambda - \frac{w^2}{u-\lambda})y = h\in L^2(0,1) \;\;\; y(1)+\cot(\beta)y(1) = 0 = y'(0) + \cot(\alpha)y(0) \]
for a suitably chosen $h$. Having fixed such $y$ and $h$, then for any $z\in L^2(\Wc)$ we may define
$g$ to satisfy
\[ z = \frac{1}{u-\lambda}(g-wy) \]
and clearly have $g\in L^2(\Wc)$. Finally we define $f\in L^2(0,1)$ by
$f = h  + wg/(u-\lambda)$ so that $f\in L^2(0,1)$ and $h = f-wg/(u-\lambda)$. We thus have
\be \label{eq:zy} (-\frac{d^2}{dx^2}+q-\lambda - \frac{w^2}{u-\lambda})y = f-wg/(u-\lambda), \;\;\;
 z = \frac{1}{u-\lambda}(g-wy). 
 \ee
This is equivalent to (\ref{eq:ARS}). We have therefore approximated
an arbitrary element of $\left(\begin{array}{c} L^2(0,1) \\
                                                          L^2(\Wc) \end{array}\right)$
by a function in $(A_{\alpha\beta}-\lambda I)^{-1}\left(\begin{array}{c} L^2(0,1) \\
                                                          L^2(\Wc) \end{array}\right)$.
To get the opposite inclusion consider 
\[ \left(\begin{array}{c} y \\ z \end{array}\right) = (A_{\alpha\beta}-\lambda I)^{-1}\left(\begin{array}{c} f \\
                                                          g \end{array}\right) \]
in which $g \in L^2(\Wc)$. We need to show that $z \in L^2(\Wc)$ also.
The expression for $z$ is given in (\ref{eq:zy}); evidently $wy\in L^2(\Wc)$ and
$g \in L^2(\Wc)$ so the result is immediate.
\end{proof}

\section{A perturbed multiplication operator in $L^2(\Rr)$}\label{section:6}
The results of the foregoing sections show that there are often wide gaps between what may be true
at an abstract level about the relationship between resolvents and $M$-functions, and what may be
achievable in concrete examples.

In light of these gaps, in this section we consider boundary triplets and Weyl $M$-functions
for a simple Friedrichs model with a singular perturbation. Our purpose is to show even more unexpected and counter-intuitive results. For example, in  \cite[Section 4]{kn:BMNW} it  is shown that isolated eigenvalues  of an operator correspond to  isolated poles of the associated $M$-function assuming unique continuation holds, i.e.
\[\ker(\At^*-\lambda)\cap\ker(\Gamma_1)\cap\ker(\Gamma_2)=\{0\},\]
while \cite[Proposition 5.2]{MM02} shows this result under the assumption that the point under consideration is in the resolvent set of an extension of the minimal operator.
In this section, we shall find that these hypotheses which have seemed 
reasonable in the development of an abstract theory of boundary triplets 
are not satisfied by a rather simple example. As a consequence, the 
relationship between the $M$-function and the spectrum of the operator 
becomes more interesting.

We consider in $L^2(\Rr)$ the operator $A$ with domain given by
\be D(A) = \left\{ f\in L^2(\Rr) \, | \, xf(x)\in L^2(\Rr), \;\;\;
 \lim_{R\rightarrow\infty}\int_{-R}^{R}f(x)dx \;\; \mbox{exists and is zero}\right\}, \label{eq:1} \ee
given by the expression
\be (Af)(x) = x f(x) + \langle f,\phi\rangle \psi(x), \label{eq:2} \ee
where $\phi$, $\psi$ are in $L^2(\Rr)$. Observe that since the constant
function ${\mathbf 1}$ does not lie in $L^2(\Rr)$ the domain of $A$ is 
dense in $L^2(\Rr)$. 

Formally, the expression  $xf(x) + \langle f,\phi\rangle\psi(x)$ is
equivalent, by Fourier transformation, to a sum of a first order
differential operator and an inner product (integral) term acting on
the Fourier transform $\hat{f}$. The condition $\int_{\Rr}f = 0$ is 
equivalent to a `boundary' condition $\hat{f}(0) = 0$.

\begin{lem}\label{lemma:1}
The adjoint of $A$ is given on the domain
\be D(A^*) = \left\{ f\in L^2(\Rr) \, | \, \exists c_f\in \Cc : xf(x)-c_f{\mathbf 1} 
\in L^2(\Rr)\right\}, \label{eq:3} \ee
by the formula
\be A^*f = x f(x) - c_f{\mathbf 1} +\langle f,\psi\rangle\phi. \label{eq:4}\ee
\end{lem}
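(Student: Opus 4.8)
The plan is to compute the adjoint directly from its defining relation $(Af, g) = (f, A^*g)$ for all $f \in D(A)$, and to read off both the domain and the action of $A^*$ from this. I would begin with an arbitrary $g \in L^2(\Rr)$ in the putative domain and an arbitrary $f \in D(A)$, and expand
\[
(Af, g) = \int_{\Rr} (x f(x) + \langle f, \phi\rangle \psi(x))\,\overline{g(x)}\,dx
 = \int_{\Rr} f(x)\,\overline{x g(x)}\,dx + \langle f, \phi\rangle \langle \psi, g\rangle.
\]
The second term can be rewritten as $\langle f, \phi\rangle \langle \psi, g\rangle = \langle f, \overline{\langle \psi, g\rangle}\,\phi\rangle = (f, \langle g, \psi\rangle \phi)$, which already accounts for the $\langle f, \psi\rangle \phi$ term appearing in (\ref{eq:4}) (with the roles of $\phi$ and $\psi$ interchanged, as expected for an adjoint). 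So the real work is to understand the first term $\int f(x)\,\overline{x g(x)}\,dx$ and to see when it defines a bounded conjugate-linear functional of the right form on $D(A)$.

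The key obstacle, and the heart of the lemma, is the constraint $\int_{\Rr} f = 0$ built into $D(A)$. Because of this constraint, the functional $f \mapsto \int f(x)\,\overline{x g(x)}\,dx$ need \emph{not} force $xg(x) \in L^2(\Rr)$: I may add any constant multiple of the function $x g(x) \mapsto x g(x) - c\,\mathbf{1}$ without changing its pairing against functions $f$ of mean zero, precisely because $\int f \cdot \overline{c\,\mathbf 1} = \overline{c}\int f = 0$. This is exactly the mechanism that produces the term $-c_f \mathbf 1$ in (\ref{eq:3})--(\ref{eq:4}): the adjoint domain consists of those $g$ for which $x g(x)$ agrees with an $L^2$ function \emph{modulo a constant}, i.e.\ $x g(x) - c_g \mathbf 1 \in L^2(\Rr)$ for some constant $c_g$. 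I would make this rigorous by noting that $g \in D(A^*)$ iff $f \mapsto \int f \,\overline{x g}$ extends to a bounded functional on all of $L^2(\Rr)$ restricted through the mean-zero constraint; by the annihilator description of the constraint (the orthogonal complement of the mean-zero functions is spanned, in the appropriate dual sense, by the constant $\mathbf 1$), boundedness is equivalent to $x g(x)$ lying in $L^2 \oplus \Cc\mathbf 1$, which is the stated domain condition.

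With the domain identified, the formula for $A^*$ follows by Riesz representation: for $g \in D(A^*)$ write $x g(x) = h_g(x) + c_g \mathbf 1$ with $h_g \in L^2(\Rr)$, and then for $f \in D(A)$ one has $\int f\,\overline{x g} = \int f\,\overline{h_g} + \overline{c_g}\int f = \int f\,\overline{h_g}$, using $\int f = 0$ once more. Combining this with the perturbation term computed above gives $(Af, g) = (f,\, h_g + \langle g, \psi\rangle\phi) = (f,\, x g - c_g\mathbf 1 + \langle g,\psi\rangle \phi)$, which matches (\ref{eq:4}) after relabelling $g$ as $f$ and $c_g$ as $c_f$. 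The main point to check carefully is that the constant $c_f$ is \emph{uniquely} determined by $f$ (so that the formula is well-defined): this holds because $\mathbf 1 \notin L^2(\Rr)$, so the decomposition $x g = h_g + c_g \mathbf 1$ into an $L^2$ part plus a constant multiple of $\mathbf 1$ is unique. I would close by confirming that $A^*$ is densely defined and that the two inclusions ($g \in D(A^*)$ implies the domain condition, and conversely) are both genuinely established, so that the domain is exactly (\ref{eq:3}) rather than merely contained in it.
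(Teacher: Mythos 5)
Your proposal is correct and follows essentially the same route as the paper: compute $\langle Af,g\rangle$, use the mean-zero constraint together with the density of $D(A)$ to conclude that $xg$ plus the rank-one term must lie in $L^2(\Rr)+\Cc\mathbf{1}$, and then read off the formula for $A^*g$. Your extra remarks on the uniqueness of $c_f$ (since $\mathbf{1}\notin L^2(\Rr)$) and on verifying both inclusions for the domain are sensible additions, but the core argument is the one the paper gives.
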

\begin{proof}
Suppose that $f\mapsto \langle A f , g\rangle$ is a bounded linear
functional on $D(A)$. A direct calculation shows that
\[ \langle A f , g\rangle = \int_{\Rr} f(x)\overline{\left(x g(x) + \langle g,\psi\rangle \phi(x)\right)}dx. \]
(Note that the integral is convergent since $xf(x)\in L^2(\Rr)$ and $g\in L^2(\Rr)$.)
In view of the constraint $\int_{\Rr}f = 0$ and the density of $D(A)$ in
$L^2(\Rr)$, the $L^2(\Rr)$-boundedness of this functional implies that for 
some constant $c_g$,
\[ xg+\langle g,\psi\rangle \phi = c_g{\mathbf 1} + h \]
for some $h\in L^2(\Rr)$. Since $\phi\in L^2(\Rr)$ this implies that
$xg- c_g{\mathbf 1}\in L^2(\Rr)$ and so
\[ \langle A f , g\rangle = 
\langle f, xg- c_g{\mathbf 1}+\langle g,\psi\rangle \phi\rangle. \]
The density of $D(A)$ in $L^2(\Rr)$ now gives
$A^*g = xg- c_g{\mathbf 1}+\langle g,\psi\rangle \phi$.
\end{proof}
\begin{rem}
For $f$ sufficiently well behaved at infinity, the constant $c_f$ appearing in Lemma
\ref{lemma:1} is given by
\[ c_f = \lim_{x\rightarrow\infty} xf(x). \]
\end{rem}

For later reference we can calculate the deficiency indices of $A$. To this
end we may neglect the finite rank term $\langle \cdot,\phi\rangle\psi$ and 
calculate the set of $u$ such that
\[ x u(x) - c_u {\mathbf 1} = \pm i u. \]
This yields $u = c_u\frac{\mathbf 1}{x\mp i}$; the factor $c_u$ is a normalization.
A simple calculation shows that $c_{(x\mp i)^{-1}}=1$ and so we may choose
\[ u(x) = (x\mp i)^{-1} \]
as the deficiency elements, showing that $A$ has deficiency indices 
$(1,1)$.

We now introduce `boundary value' operators $\Gamma_1$ and $\Gamma_2$ on 
$D(A^*)$ as follows:
\be \Gamma_1 u = \int_{\Rr} (u(x) - c_u{\mathbf 1}\sign(x)(x^2+1)^{-1/2})dx,\;\;\;
 \Gamma_2 u = c_u. \label{eq:5}\ee
We make the following observations.

\begin{lem}\label{lemma:2}
The operators $\Gamma_1$ and $\Gamma_2$ are bounded relative to $A^*$. 
\end{lem}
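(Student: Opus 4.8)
The plan is to show that $\Gamma_1$ and $\Gamma_2$ are bounded with respect to the graph norm of $A^*$, i.e. that there exists a constant $C$ so that $|\Gamma_1 u| + |\Gamma_2 u| \le C(\|u\| + \|A^* u\|)$ for all $u \in D(A^*)$. Since $\Gamma_2 u = c_u$, the heart of the matter is to control the scalar $c_u$ by the graph norm, and then to control the integral defining $\Gamma_1 u$.

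\medskip

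First I would handle $\Gamma_2$. From Lemma \ref{lemma:1} we have $A^* u = x u(x) - c_u {\mathbf 1} + \langle u,\psi\rangle \phi$, so that
\[ x u(x) - c_u {\mathbf 1} = A^* u - \langle u,\psi\rangle \phi. \]
The right-hand side lies in $L^2(\Rr)$, with $L^2$-norm bounded by $\|A^* u\| + \|\phi\|\,\|\psi\|\,\|u\|$ via Cauchy--Schwarz. The key observation is that $c_u$ is \emph{determined} by the requirement that $x u(x) - c_u {\mathbf 1} \in L^2$: the constant $c_u$ is precisely the coefficient that must be subtracted to cancel the non-$L^2$ tail of $x u(x)$. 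To extract $c_u$ quantitatively I would pair $x u(x) - c_u {\mathbf 1}$ against a fixed test function $\theta \in L^2(\Rr)$ with $\langle {\mathbf 1}, \theta\rangle = \int \overline{\theta} = 1$ (for instance a suitable truncation or a function like $\sign(x)(x^2+1)^{-1/2}$ is not in $L^2$, so instead one takes $\theta$ of compact support or rapid decay with nonzero integral). This gives
\[ c_u = \langle x u(x) - c_u {\mathbf 1}, \theta\rangle \cdot (-1) + \langle x u, \theta\rangle, \]
which after rearrangement expresses $c_u$ as a bounded functional of $x u - c_u {\mathbf 1} \in L^2$ together with a term in $u$ itself; care is needed because $x u$ alone need not be in $L^2$, so the pairing must be organized so that only the combination $x u - c_u{\mathbf 1}$ and $u$ appear. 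I expect this bookkeeping to be the main obstacle: one must choose the test function so that $\langle c_u {\mathbf 1},\theta\rangle$ and $\langle x u,\theta\rangle$ are separately finite and combine to isolate $c_u$ cleanly, yielding $|c_u| \le C(\|u\| + \|A^* u\|)$.

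\medskip

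Next I would treat $\Gamma_1$. Writing $g(x) := u(x) - c_u {\mathbf 1}\,\sign(x)(x^2+1)^{-1/2}$, I must show $\int_{\Rr} g$ is finite and bounded by the graph norm. The idea is to insert the factor $x/x$ and relate the integrand to the $L^2$ function $v := x u - c_u {\mathbf 1}$ controlled above. For $|x|\ge 1$ one has $\sign(x)(x^2+1)^{-1/2} \approx 1/x$, and $u(x) = (v(x) + c_u {\mathbf 1})/x$, so $g(x)$ behaves like $(v(x) + c_u(1/x - \sign(x)(x^2+1)^{-1/2} \cdot x))/x$; the constant pieces are designed to cancel the slowly-decaying $1/x$ tail, leaving an integrand that is $O(v(x)/x)$ plus an integrable correction. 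Then $\int_{|x|\ge 1} |g| \le \|v\|_{L^2} \big(\int_{|x|\ge 1} x^{-2}\,dx\big)^{1/2} + (\text{integrable remainder})$ by Cauchy--Schwarz, and the contribution from $|x| < 1$ is bounded trivially by $\|u\|_{L^2}$ plus a multiple of $|c_u|$. Combining these estimates with the bound on $c_u$ from the previous paragraph gives $|\Gamma_1 u| \le C(\|u\| + \|A^* u\|)$, completing the proof. The delicate point here is verifying that the particular subtraction $c_u\,\sign(x)(x^2+1)^{-1/2}$ cancels the non-integrable tail exactly enough that the remainder is both integrable and $L^2$-controllable; I would check this cancellation by the asymptotic expansion $(x^2+1)^{-1/2} = |x|^{-1}(1 + O(x^{-2}))$ for large $|x|$.
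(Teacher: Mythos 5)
Your proposal is correct and follows essentially the same route as the paper: you control $c_u$ by testing the $L^2$ identity $xu-c_u\mathbf{1}=A^*u-\langle u,\psi\rangle\phi$ against a localized function (the paper simply multiplies by $\chi_{(0,1)}$ and takes norms), and you bound $\Gamma_1 u$ by exploiting the cancellation between $u$ and $c_u\,\sign(x)(x^2+1)^{-1/2}$ at infinity through the $L^2$ function $xu-c_u\mathbf{1}$. The paper packages the second step as the single identity $\Gamma_1 u=\int_{\Rr}\{(\sqrt{x^2+1}\,\sign(x)-x)u+(xu-c_u\mathbf{1})\}\sign(x)(x^2+1)^{-1/2}\,dx$ rather than splitting at $|x|=1$, but the underlying estimates are the same and the "bookkeeping" you worry about is harmless.
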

\begin{proof}
Observe that
\[ c_u = -A^*u + x u + \langle u,\psi\rangle\phi. \]
Multiply both sides by the characteristic function $\chi_{(0,1)}$
of the interval $(0,1)$, then take $L^2$-norms, to obtain
\[ |c_u| \leq \| A^*u \| + \| u \| + \| u \| \| \psi \| \| \phi \| \]
which shows that $\Gamma_2$ is bounded relative to $A^*$. Similarly, 
an elementary calculation shows that
\[ \Gamma_1 u = \int_{\Rr} \{ (\sqrt{x^2+1}\mbox{sign}(x)-x)u(x) + 
(x u(x) - c_u{\mathbf 1})\}\frac{\mbox{sign}(x)}{\sqrt{x^2+1}}dx; \]
since $(\sqrt{x^2+1})\mbox{sign}(x)-x\in L^2(\Rr)$, this shows that $\Gamma_1$
is bounded relative to $A^*$.
\end{proof}

\begin{lem}\label{lemma:3}
The following `Green's identity' holds:
\be \langle A^*f,g\rangle - \langle f,A^*g \rangle
 = \Gamma_1 f \overline{\Gamma_2 g} - \Gamma_2 f \overline{\Gamma_1 g}
 + \langle f,\psi\rangle \langle\phi,g\rangle
 - \langle f,\phi\rangle \langle \psi,g\rangle. \label{eq:6}
\ee
Consequently, in the case when $\phi=\psi$, the operators 
$\left. A^*\right|_{\ker(\Gamma_1-B\Gamma_2)}$ are selfadjoint for 
any real number $B$.
\end{lem}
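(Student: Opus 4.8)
The plan is to prove the Green's identity \eqref{eq:6} by a direct calculation using the explicit formulas for $A^*$ from Lemma \ref{lemma:1}, and then to verify the self-adjointness claim as a consequence. Let me sketch both parts.

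For the identity \eqref{eq:6}, I would start from the formula $A^*f = xf - c_f\mathbf{1} + \langle f,\psi\rangle\phi$ and compute
\[
\langle A^*f,g\rangle - \langle f,A^*g\rangle
= \langle xf - c_f\mathbf{1},g\rangle - \langle f, xg - c_g\mathbf{1}\rangle
+ \langle f,\psi\rangle\langle\phi,g\rangle - \langle f,\phi\rangle\langle\psi,g\rangle,
\]
where the finite-rank terms already reproduce the last two summands on the right-hand side of \eqref{eq:6} exactly (using $\langle\langle f,\psi\rangle\phi,g\rangle = \langle f,\psi\rangle\langle\phi,g\rangle$ and similarly for the other). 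Since $x$ is real, the multiplication part $\langle xf,g\rangle - \langle f,xg\rangle$ vanishes formally, so everything reduces to controlling the boundary contribution
\[
-\langle c_f\mathbf{1},g\rangle + \langle f, c_g\mathbf{1}\rangle
= -c_f\langle\mathbf{1},g\rangle + \overline{c_g}\langle f,\mathbf{1}\rangle.
\]
The main obstacle is that $\mathbf{1}\notin L^2(\Rr)$, so none of these ``boundary'' integrals converges in isolation; the cancellation between $\langle xf,g\rangle$, $\langle f,xg\rangle$ and the $c_f,c_g$ terms must be handled simultaneously. The right way is to truncate: replace $\Rr$ by $[-R,R]$, use that $xf - c_f\mathbf{1}$ and $xg - c_g\mathbf{1}$ are genuinely in $L^2$, and regroup as
\[
\int_{-R}^R \big[(xf - c_f)\overline{g} - f\overline{(xg - c_g)}\big]\,dx
= \int_{-R}^R\big[c_g\overline{\phantom{g}}\,f - c_f\,\overline{g}\big]\,dx + (\text{convergent remainder}).
\]
I would then recognize the surviving divergent-looking integrals in terms of the boundary operators: by the definition \eqref{eq:5}, $\Gamma_1 f = \int_\Rr(f - c_f\mathbf{1}\,\sign(x)(x^2+1)^{-1/2})\,dx$ and $\Gamma_2 f = c_f$, and the regularizing factor $\sign(x)(x^2+1)^{-1/2}$ is precisely engineered so that the symmetric/antisymmetric combination $\Gamma_1 f\,\overline{\Gamma_2 g} - \Gamma_2 f\,\overline{\Gamma_1 g}$ equals the limit $R\to\infty$ of the boundary terms. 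The key cancellation is that the $c_f c_g$-contributions from $\int_{-R}^R \sign(x)(x^2+1)^{-1/2}\,dx$ drop out because that integrand is odd, so $\Gamma_1 f\,\overline{\Gamma_2 g} - \Gamma_2 f\,\overline{\Gamma_1 g}$ contains no leftover $c_f\overline{c_g}$ term and matches the improper integral $\lim_{R\to\infty}\int_{-R}^R(c_g f - c_f g)\,dx$ which is exactly what the limit-zero condition in $D(A)$ (extended to $D(A^*)$ via the constants $c_f$) produces.

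For the self-adjointness claim, suppose $\phi=\psi$. Then the last two terms in \eqref{eq:6} become $\langle f,\phi\rangle\langle\phi,g\rangle - \langle f,\phi\rangle\langle\phi,g\rangle = 0$, so Green's identity collapses to the clean boundary form $\langle A^*f,g\rangle - \langle f,A^*g\rangle = \Gamma_1 f\,\overline{\Gamma_2 g} - \Gamma_2 f\,\overline{\Gamma_1 g}$. For real $B$, the operator $A_B = A^*|_{\ker(\Gamma_1 - B\Gamma_2)}$ satisfies $\Gamma_1 f = B\Gamma_2 f$ on its domain, and substituting this gives $\langle A_B f,g\rangle - \langle f,A_B g\rangle = B\Gamma_2 f\,\overline{\Gamma_2 g} - \Gamma_2 f\,\overline{B\Gamma_2 g} = (B - \overline{B})\Gamma_2 f\,\overline{\Gamma_2 g} = 0$ since $B$ is real, proving $A_B$ is symmetric. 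To upgrade symmetry to self-adjointness I would note that $A$ has deficiency indices $(1,1)$ as computed above, that $A_B$ is a one-parameter (real $B$) family of symmetric extensions, and that each such extension is exactly a self-adjoint extension in the standard Kre\u{\i}n--von Neumann classification; concretely, one checks that $\ker(\Gamma_1 - B\Gamma_2)$ defines a Lagrangian subspace of the boundary form, which for a symmetric operator with equal deficiency indices characterizes the self-adjoint extensions. The only subtlety worth flagging is ensuring $B$ real genuinely yields a self-adjoint (not merely symmetric) extension, which follows because the boundary form is a nondegenerate symplectic form on the one-dimensional-per-side boundary data and the real condition picks out a maximal isotropic subspace.
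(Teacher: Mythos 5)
Your proposal is correct and follows essentially the same route as the paper: the paper dispatches the identity \eqref{eq:6} as ``a simple calculation'' (which your truncation-and-cancellation argument carries out correctly, the point being that $(xf-c_f\mathbf{1})\overline{g}-f\overline{(xg-c_g\mathbf{1})}=\overline{c_g}f-c_f\overline{g}$ pointwise and the odd regularizer $\sign(x)(x^2+1)^{-1/2}$ cancels between the two $\Gamma_1$ terms), and it obtains selfadjointness for real $B$ by noting $A$ is symmetric when $\phi=\psi$ and citing the standard boundary-value-space result of Gorbachuk--Gorbachuk, which is exactly the deficiency-index/Lagrangian-subspace argument you sketch.
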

\begin{proof}
The identity (\ref{eq:6}) is a simple calculation. In the case
when $\phi = \psi$ the operator $A$ is symmetric and the
selfadjointess of the extensions $\left. A^*\right|_{\ker(\Gamma_1-B\Gamma_2)}$
is a well known result from theory of boundary value spaces: see,
e.g., Gorbachuk and Gorbachuk \cite{kn:GG}.
\end{proof}

In the case when $\phi\neq \psi$, the terms 
$\langle f,\psi\rangle \langle\phi,g\rangle 
- \langle f,\phi\rangle \langle \psi,g\rangle$
on the right hand side of (\ref{eq:6}) arise from the fact that
$A^*$ is not an extension of $A$. In order to eliminate these
terms we follow the formalism of Lyantze and Storozh \cite{Lyantze} 
and introduce an operator $\tilde{A}$ in which $\phi$ and
$\psi$ are swapped:
\be D(\At) = \left\{ f\in L^2(\Rr) \, | \, xf(x)\in L^2(\Rr), \;\;\;
 \lim_{R\rightarrow\infty}\int_{-R}^{R}f(x)dx = 0\right\}, \label{eq:1t} \ee
\be (\At f)(x) = x f(x) + \langle f,\psi\rangle \phi. \label{eq:2t} \ee
In view of Lemma \ref{lemma:1} we immediately see that
$D(\At^*) = D(A^*)$ and that
\be \At^*f = x f(x) - c_f{\mathbf 1} +\langle f,\phi\rangle\psi. \label{eq:4t}\ee
Thus $\At^*$ is an extension of $A$,  $A^*$ is an extension of $\At$, 
and the following result is easily proved.

\begin{lem}\label{lemma:4}
\be A = \left. \At^*\right|_{\ker(\Gamma_1)\cap\ker(\Gamma_2)}; \;\;\;
 \At = \left. A^*\right|_{\ker(\Gamma_1)\cap\ker(\Gamma_2)}; \label{eq:5t}
\ee
moreover, the Green's formula (\ref{eq:6}) can be modified to
\be \langle A^*f,g\rangle - \langle f,\At^*g \rangle
 = \Gamma_1 f \overline{\Gamma_2 g} - \Gamma_2 f \overline{\Gamma_1 g}.
 \label{eq:6t}
\ee
\end{lem}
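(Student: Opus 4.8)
The plan is to derive both operator identities in (\ref{eq:5t}) from a single explicit description of the set $D(A^*)\cap\ker\Gamma_1\cap\ker\Gamma_2$, and then to obtain the modified Green formula (\ref{eq:6t}) as a purely algebraic consequence of (\ref{eq:6}). Since $D(\At^*)=D(A^*)$ by Lemma \ref{lemma:1} and its analogue (\ref{eq:4t}), the restrictions of $\At^*$ and of $A^*$ to $\ker\Gamma_1\cap\ker\Gamma_2$ act on the same underlying domain, so the whole of (\ref{eq:5t}) hinges on identifying that domain with $D(A)=D(\At)$.

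The core step is the domain computation, which splits according to the two boundary operators. For $\Gamma_2$: for $f\in D(A^*)$ the defining relation $xf-c_f{\mathbf 1}\in L^2(\Rr)$ gives $c_f{\mathbf 1}=xf-(xf-c_f{\mathbf 1})$, so $xf\in L^2(\Rr)$ holds if and only if $c_f{\mathbf 1}\in L^2(\Rr)$, i.e.\ if and only if $c_f=0$; hence within $D(A^*)$ the condition $\Gamma_2 f=c_f=0$ is exactly $xf\in L^2(\Rr)$. For $\Gamma_1$ I would restrict to the set where $c_f=0$, on which the regularising term $c_f{\mathbf 1}\,\sign(x)(x^2+1)^{-1/2}$ in (\ref{eq:5}) vanishes and $\Gamma_1 f=\int_\Rr f\,dx$. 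I would then check this is an honest Lebesgue integral: from $xf\in L^2(\Rr)$ one has $f=(xf)\cdot x^{-1}\in L^1(\{|x|\ge 1\})$ by Cauchy--Schwarz, while $f\in L^2\subseteq L^1$ on $\{|x|\le 1\}$, so $f\in L^1(\Rr)$ and $\int_\Rr f\,dx=\lim_{R\to\infty}\int_{-R}^R f\,dx$. Thus, on $\{f\in D(A^*):c_f=0\}$, the condition $\Gamma_1 f=0$ coincides with the symmetric-limit condition in the definitions (\ref{eq:1}) and (\ref{eq:1t}) of $D(A)$ and $D(\At)$. Combining the two steps yields $D(A^*)\cap\ker\Gamma_1\cap\ker\Gamma_2=D(A)=D(\At)$, and on this domain $c_f=0$, so $\At^*f=xf+\langle f,\phi\rangle\psi=Af$ and $A^*f=xf+\langle f,\psi\rangle\phi=\At f$, which establishes (\ref{eq:5t}).

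For the modified Green formula I would use the rank-two identity $A^*g-\At^*g=\langle g,\psi\rangle\phi-\langle g,\phi\rangle\psi$, read off from (\ref{eq:4}) and (\ref{eq:4t}). Substituting $\langle f,\At^*g\rangle=\langle f,A^*g\rangle-\langle f,\langle g,\psi\rangle\phi-\langle g,\phi\rangle\psi\rangle$ into the left-hand side of (\ref{eq:6t}) produces $\langle A^*f,g\rangle-\langle f,A^*g\rangle$ together with the correction $\langle\psi,g\rangle\langle f,\phi\rangle-\langle\phi,g\rangle\langle f,\psi\rangle$. Inserting (\ref{eq:6}) for the first difference, this correction cancels exactly against the two extra inner-product terms on the right-hand side of (\ref{eq:6}), leaving precisely $\Gamma_1 f\,\overline{\Gamma_2 g}-\Gamma_2 f\,\overline{\Gamma_1 g}$.

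I expect the main obstacle to be the careful treatment of the conditionally convergent boundary functional $\Gamma_1$: one must verify that the regularisation by $\sign(x)(x^2+1)^{-1/2}$ genuinely disappears once $c_f=0$, and that in that case $f$ is absolutely integrable, so that $\Gamma_1 f$ reduces to the ordinary integral and matches the symmetric-limit condition built into $D(A)$. Once this analytic point is settled, the matching of the operator actions and the cancellation in the Green formula are routine.
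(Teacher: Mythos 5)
Your proof is correct; the paper states this lemma without proof (``easily proved''), and your argument supplies exactly the intended routine verification: the identification $\ker\Gamma_2=\{c_f=0\}=\{xf\in L^2\}$, the observation that $c_f=0$ makes $f\in L^1$ so $\Gamma_1 f$ reduces to $\int_{\Rr}f\,dx$ and matches the symmetric-limit condition in (\ref{eq:1}), and the cancellation of the rank-two correction terms against the extra inner products in (\ref{eq:6}). Nothing further is needed.
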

This is a slight simplification of the situation in \cite{Lyantze}
as only two boundary operators are required, rather than four.

For any fixed complex number $B$ and suitable $\lambda\in \Cc$, by the
`Weyl function $M_B(\lambda)$' we shall
mean the map
\be M_B(\lambda) := \Gamma_2 \left(\left.(\Gamma_1-B\Gamma_2)\right|_{\ker(\At^*-\lambda)}\right)^{-1}. 
\label{eq:mdef}
\ee

We now calculate $M_B(\lambda)$. Suppose that $\Im\lambda\neq 0$ and 
that $f\in \mbox{ker}(\At^*-\lambda I)$.  Then
\[ x f(x) - c_f + \langle f,\phi\rangle \psi = \lambda f \]
and simple algebra yields
\be f = \frac{c_f - \langle f,\phi\rangle \psi}{x-\lambda}. \label{eq:extra2}
\ee
Taking inner products with $\phi$ and recalling that $\Gamma_2f = c_f$ yields
\be \langle f, \phi\rangle D(\lambda)
 = \Gamma_2 f \langle (x-\lambda)^{-1},\phi\rangle \label{eq:extra1} \ee
where $D(\lambda) = 1 + \int_{\Rr}(x-\lambda)^{-1}\psi\overline{\phi}dx$.
Substituting back into (\ref{eq:extra2}) yields
\be f = \Gamma_2f \left[ \frac{1}{x-\lambda}-\frac{\langle (x-\lambda)^{-1},\phi\rangle}{D(\lambda)}\frac{\psi}{x-\lambda}\right] , \label{eq:7}\ee
It follows upon calculating the relevant integrals that
\be \Gamma_1f = \left[ \mbox{sign}(\Im\lambda) \pi i +\frac{\langle (x-\lambda)^{-1},\overline{\psi}\rangle
 \langle (x-\lambda)^{-1},\phi\rangle }{D(\lambda)} \right]\Gamma_2 f, \label{eq:8}\ee
and so
\be M_B(\lambda) = \left[ \mbox{sign}(\Im\lambda) \pi i +\frac{\langle (x-\lambda)^{-1},\overline{\psi}\rangle
 \langle (x-\lambda)^{-1},\phi\rangle }{D(\lambda)} - B\right]^{-1}. \label{eq:9}
\ee
\begin{rem}\label{remark:2}
If $D(\lambda)$ is nonzero then a local unique continuation
property holds:
\be f \in \ker(\At^*-\lambda)\cap\ker(\Gamma_1)\cap\ker(\Gamma_2) = 0
\;\;\; \Longrightarrow \;\;\; f = 0. \label{eq:luc}\ee
To see this observe that from (\ref{eq:7}) we see that $\Gamma_2f=0$
implies $f=0$, giving unique continuation a fortiori.
\end{rem}
\begin{rem}\label{remark:3}
Generically, the $M$-function $M_B(\lambda)$ `sees' the whole essential spectrum:
the term $\mbox{sign}(\Im(\lambda)\pi i)$ has a discontinuity across
the real axis which one cannot expect to be cancelled by the
other terms, except possibly on a set of measure zero.
\end{rem}
\begin{ex}\label{example:1}
If $\phi$ and $\psi$ both lie in the Hardy space $H^2_+$ 
(see Koosis \cite{Koosis} for definitions and properties of Hardy spaces)
then the inner product $\langle (x-\lambda)^{-1},\phi\rangle$ is zero for $\Im\lambda>0$ and the
inner product $\langle (x-\lambda)^{-1},\overline{\psi}\rangle$ is zero for
$\Im\lambda<0$. In this case $M_B(\lambda)$ has no poles and is given by
\[ M_B(\lambda) = (\mbox{sign}(\Im\lambda)\pi i - B)^{-1}. \]
If $B=\pi i$ then the entire upper half plane is filled with eigenvalues of
the operator $\At^*|_{\ker(\Gamma_1-B\Gamma_2)}$; if $B=-\pi i$ then it is the
lower half plane which is entirely filled with eigenvalues. 
\end{ex}

\begin{ex}\label{example:2}
We construct an example with a  particularly interesting property:
 an eigenvalue which is not a pole of the $M$-function.

Consider the case where
$\phi$ and $\psi$ both lie in $H^2_+$; fix $\lambda_0$ and, by choice
of $\phi$ and $\psi$, arrange that $D(\lambda_0)=0$. Avoid the pathological
cases where eigenvalues fill the entire upper or lower half planes by choosing
$B=0$; we have
\[ M_0(\lambda) = \frac{1}{\pi i} \mbox{sign}(\Im\lambda). \]
Consider the function
\[ u(x) = \frac{\psi(x)}{x-\lambda_0}. \]
Since $D(\lambda_0)=0$ it follows that $\langle u,\phi\rangle = -1$.
Moreover it is easy to check that $\Gamma_2u = c_{u} = 0$. It
is now easy to check that $u$ satisfies
\[ xu(x) + \langle u,\phi\rangle\psi = \frac{\lambda_0 \psi}{x-\lambda_0} \]
and so $u$ is an eigenfunction of $\left.\At^*\right|_{\ker(\Gamma_2)}$
with eigenvalue $\lambda_0$. However $\lambda_0$ is not a pole
of $M_0^{-1}(\lambda)$, in apparent contradiction to the results 
in \cite{MM02} and \cite{kn:BMNW} mentioned at the beginning of this section. 

Which hypotheses have failed?

If  $\Im\lambda_0<0$ then observe that
$\Gamma_1 u = \langle (x-\lambda_0)^{-1},\overline{\psi}\rangle = 0$, 
so the eigenfunction $u$ belongs to the domain of the minimal operator $A$, and hence 
to the domain of every extension: thus unique continuation fails, so there is
no contradiction to the theorems in \cite{kn:BMNW}. The failure
of unique continuation implies that there is no extension of $A$ for
which $\lambda_0$ lies in the resolvent set, and so there is no
contradiction to the results in \cite{MM02} either.

If  $\Im\lambda_0>0$ then although $\lambda_0$ is no longer an eigenvalue for
every extension, it nevertheless lies in the spectrum of every extension.
To see this, attempt to solve
\[ (x-\lambda)u-\Gamma_1 u + \langle u,\phi\rangle\psi = f, \]
\[ (\Gamma_1-C\Gamma_2)u=0, \]
with $\Im\lambda>0$.
Taking the inner products of both sides and remembering that $\langle (x-\lambda)^{-1},
\phi\rangle=0$ in the upper half plane we obtain
\[ \langle u, \phi \rangle = \langle \frac{f}{x-\lambda},\phi\rangle
 - \langle u,\phi\rangle \langle \frac{\psi}{x-\lambda},\phi \rangle. \]
At $\lambda=\lambda_0$ we have $\langle \frac{\psi}{x-\lambda_0},\phi \rangle = -1$ since
$D(\lambda_0)=0$ and so we obtain
\be \langle \frac{f}{x-\lambda_0},\phi\rangle = 0. \label{eq:10} \ee
Thus the problem can only be solved for $f$ satisfying the condition (\ref{eq:10})
and so $\lambda_0$ lies in the spectrum of every extension of $\At^*$. This gives
a further reason why we would not expect $\lambda_0$ to be a pole of any $M$-function.
\end{ex}

\begin{ex}\label{example:3}
In the case $\phi = \psi\in H^2_+$ the operators $\At^*|_{ker(\Gamma_1-B\Gamma_2)}$
are selfadjoint for real $B$. The functions $M_B(\lambda)$ still cannot `see' $\phi$
and $\psi$, however, being given by
\[ M_B(\lambda) = (\mbox{sign}(\Im\lambda)\pi i - B)^{-1}. \]
Any eigenvalues of the operator will obviously be real and will be imbedded in the 
essential spectrum. If $\lambda_0\in\Rr$ and $\psi(\lambda_0)=0$ and
\[ \int_{\Rr}\frac{|\psi(x)|^2}{x-\lambda_0}dx = -1, \]
which can always be arranged, then $\lambda_0$ will be an eigenvalue with eigenfunction
$\psi/(x-\lambda_0)$. The operator will not be unitarily equivalent to the unperturbed
operator, which has no eigenvalues. This is not surprising as the eigenfunction here
belongs to the minimal operator, which therefore fails to be completely non-selfadjoint.

There is therefore no contradiction to the results of Kre\u{\i}n, Langer and Textorius \cite{KL73,KL77,LT77} and Ryzhov \cite{Ryzhov} which
state that if the minimal operator is completely non-selfadjoint then the maximal
operator is determined up to unitary equivalence by the $M$-function.
\end{ex}

\end{document}